\newtheorem{thm}{Theorem}
\newtheorem{prop}[thm]{Proposition}
\newtheorem{lem}[thm]{Lemma}
\newtheorem{conj}[thm]{Conjecture}
\theoremstyle{definition}
\theoremstyle{remark}
\newcommand{\del}{\partial} 
\newcommand{\dbar}{\overline{\del}}
\newcommand{\ddb}{i\del\dbar}
\title{Greatest lower bounds on the Ricci curvature of Fano manifolds}
\author{G\'abor Sz\'ekelyhidi}
\date{}
\begin{document}
\begin{abstract}
	On a Fano manifold $M$ 
	we study the supremum of the possible $t$ such that
	there is a K\"ahler metric $\omega\in c_1(M)$ with Ricci
	curvature bounded below by $t$. This is shown to be
	the same as the maximum existence time of Aubin's
	continuity path for finding K\"ahler-Einstein metrics. We show
	that on $\mathbf{P}^2$ blown up in one point this supremum is
	$6/7$, and we give upper bounds for other manifolds. 
\end{abstract}

\maketitle

\section{Introduction}
The problem of finding K\"ahler-Einstein metrics is a fundamental one in
K\"ahler geometry. After the works of Yau~\cite{Yau78} and
Aubin~\cite{Aub78}
what remained is settling the existence question for Fano manifolds.
Yau~\cite{Yau93} conjectured that in this case the existence is related to
stability of the manifold in the sense of geometric invariant theory.
Important progress was made by Tian~\cite{Tian97}, who introduced the
notion of K-stability. This was extended by Donaldson to
the study of more general constant scalar curvature K\"ahler metrics
(see eg. \cite{Don01}, \cite{Don08_1}). 
The conjecture relating K-stability to the existence of constant scalar
curvature K\"ahler metrics, now called the Yau-Tian-Donaldson
conjecture, is
currently a very active field of research. For a survey and many more
references, see~\cite{PS08}. 

In this paper we study Aubin's~\cite{Aub84}
continuity method for finding
K\"ahler-Einstein metrics. Given a K\"ahler metric $\omega\in c_1(M)$,
this approach is to find $\omega_t$ solving
\[ \mathrm{Ric}(\omega_t) = t\omega_t + (1-t)\omega \]
for all $t\in[0,1]$. For $t=0$ a solution exists by Yau's
theorem. This continuity path has nice properties, an important 
one being that the Mabuchi energy~\cite{Mab86} is monotonically
decreasing along the path. This was exploited in~\cite{BM85} to show
the lower boundedness of the Mabuchi energy. It is also crucial for
finding a priori estimates using properness of the Mabuchi functional
(see~\cite{Tian97}).

We are interested in the situation when we cannot solve up to $t=1$.
Clearly understanding this is crucial in the study of obstructions to
the existence of K\"ahler-Einstein metrics. A natural question is what
the supremum of the $t$ is for which we can solve the equation. We first
show that this is independent of the choice of $\omega$, and is equal to
the invariant $R(M)$ that we define by
\[ R(M) = \sup_{t\in[0,1]}
\{\exists\omega\in c_1(M)\text{
such that Ric}(\omega)>t\omega\}.\]
The proof 
goes via relating the existence of a solution to properness of a
certain functional. For the case $t=1$ this has been done in
\cite{Tian97},
with a stronger version of properness shown in \cite{PSSW}.

The problem then becomes to determine $R(M)$ for
manifolds which do not admit K\"ahler-Einstein metrics.
Tian~\cite{Tian92}
considered the problem of bounding $R(M)$ and obtained the upper bound
$R(M_1)\leqslant 15/16$ where $M_1$ is $\mathbf{P}^2$ blown up in one point. 
We show that in fact $R(M_1)=6/7$. More generally we give an upper bound
for any Fano manifold which has non-trivial vector fields and non-vanishing 
Futaki invariant. In Section~\ref{sec:test-config} we show that if
$R(M)=1$ then the manifold is K-semistable with respect to
test-configurations with smooth total space. 

\subsection*{Acknowledgements}
I would like to thank Jacopo Stoppa and Valentino Tosatti for helpful
discussions.

\section{The definition of $R(M)$}
Let $M$ be a Fano manifold, so $c_1(M)>0$.
Let us fix a base metric $\eta\in c_1(M)$, and consider a family of
metrics 
\[ \omega_t = \eta + \ddb\phi_t.\]
The Mabuchi functional~\cite{Mab86} is defined by its variation
\[ \frac{d}{dt}\mathcal{M}(\omega_t) = \int_M \dot{\phi_t}(n -
S(\omega_t))\omega_t^n,\]
normalised so that $\mathcal{M}(\eta)=0$.
Here $S(\omega_t)$ is the scalar curvature. 
For any K\"ahler metric $\alpha\in c_1(M)$ we also define the
functional $\mathcal{J}_\alpha$, by its variation
\[ \frac{d}{dt}\mathcal{J}_\alpha(\omega_t) = \int_M \dot{\phi_t}(
\Lambda_{\omega_t}\alpha - n)\omega_t^n,\]
normalised so that $\mathcal{J}_\alpha(\eta)=0$. Here
$\Lambda_{\omega_t}$ means the trace with respect to $\omega_t$. 
The functional $\mathcal{J}_\alpha$ is essentially the same as 
$I-J$ in terms of Aubin's $I,J$ functionals (see \cite{BM85}).
When $\alpha$ is not necessarily in the same K\"ahler class as $\omega$,
it was introduced in \cite{Chen00} to study the Mabuchi energy on
manifolds with $c_1 < 0$. See also \cite{Wei04}, \cite{SW04}.

Given any $\omega\in c_1(M)$, 
Aubin's continuity path for finding K\"ahler-Einstein
metrics is given by 
\begin{equation} \label{eq:cont} 
	\omega_t^n = e^{h_\omega - t\phi_t}\omega^n, 
\end{equation}
where $h_\omega$ 
is the Ricci potential, defined by
\[ Ric(\omega) - \omega = \ddb h_\omega,\]
and normalised so that $\int_M e^{h_\omega}\omega^n = \int_M \omega^n$.
Equivalently we have $Ric(\omega_t)=t\omega_t + (1-t)\omega$. 
For $t=0$ this can be solved by Yau's theorem~\cite{Yau78}.

Finally we call a functional $\mathcal{F}$ defined on the space of
K\"ahler metrics in $c_1(M)$ \emph{proper} if there exist constants
$\epsilon,C>0$ such that
\[ \mathcal{F}(\omega) > \epsilon\mathcal{J}_\eta(\omega) - C\]
for all $\omega\in c_1(M)$. Since $\mathcal{J}_\eta$
is the same as the functional $I-J$ in the literature, this notion of
properness conincides with the one used in \cite{Tian97}.

\begin{thm} The following are equivalent for $0\leqslant t<1$.
	\begin{itemize}
		\item We can solve Equation (\ref{eq:cont}).
		\item There exists a metric $\omega\in c_1(M)$ such that
			$\mathrm{Ric}(\omega) > t\omega$.
		\item The functional $\mathcal{M} +
			(1-t)\mathcal{J}_\omega$ is proper
			for any $\omega\in c_1(M)$. 
	\end{itemize}
	In particular we can introduce an invariant $R(M)$ to be the
	supremum of the possible $t < 1$ for which the above statments
	hold.
\end{thm}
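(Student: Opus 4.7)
The plan is to prove the cycle $(1) \Rightarrow (2) \Rightarrow (1)$ together with the equivalence $(1) \Leftrightarrow (3)$, with the latter being the main analytic input and following the properness approach of Tian.

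The implication $(1) \Rightarrow (2)$ is immediate: a solution $\omega_t$ of (\ref{eq:cont}) satisfies $\mathrm{Ric}(\omega_t) = t\omega_t + (1-t)\omega > t\omega_t$ since $\omega > 0$ and $t < 1$, so $\omega_t$ itself witnesses (2). For $(2) \Rightarrow (1)$, I would exploit the freedom to choose the base metric. Given $\omega$ with $\mathrm{Ric}(\omega) > t\omega$, the form
\begin{equation*}
\alpha := (1-t)^{-1}\bigl(\mathrm{Ric}(\omega) - t\omega\bigr)
\end{equation*}
lies in $c_1(M)$, is positive, and has the property that $\omega$ tautologically solves the continuity equation at parameter $t$ with base $\alpha$. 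Because properness of $\mathcal{M} + (1-t)\mathcal{J}_\omega$ depends only on the K\"ahler class $c_1(M)$ (a cocycle identity for $\mathcal{J}$ relates the functionals for different bases up to a bounded error), the equivalence $(1) \Leftrightarrow (3)$ applied once with base $\alpha$ and once with base $\eta$ transfers solvability back to the original base.

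The substantive content is the equivalence $(1) \Leftrightarrow (3)$. Using that a solution of (\ref{eq:cont}) satisfies $S(\omega_s) = ns + (1-s)\Lambda_{\omega_s}\omega$, differentiation along the continuity path yields
\begin{equation*}
\frac{d}{ds}\mathcal{M}(\omega_s) = -(1-s)\frac{d}{ds}\mathcal{J}_\omega(\omega_s),
\end{equation*}
and integration by parts gives the master identity
\begin{equation*}
\mathcal{M}(\omega_t) + (1-t)\mathcal{J}_\omega(\omega_t) = \mathcal{M}(\omega_0) + \mathcal{J}_\omega(\omega_0) - \int_0^t \mathcal{J}_\omega(\omega_s)\, ds.
\end{equation*}
In particular $\omega_t$ is a critical point of the twisted Mabuchi functional $\mathcal{F}_t := \mathcal{M} + (1-t)\mathcal{J}_\omega$, and $\mathcal{F}_t$ is non-increasing along the continuity path. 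For $(3) \Rightarrow (1)$ I would follow Tian's strategy: properness bounds $\mathcal{J}_\eta(\omega_s)$ uniformly along any hypothetical solution of the continuity path, which via Moser iteration gives a $C^0$ estimate for the potential, and Yau-style higher-order estimates then close the continuity method all the way up to $s = t$. For $(1) \Rightarrow (3)$ I would adapt the Bando-Mabuchi argument: uniqueness of the twisted K\"ahler-Einstein metric is automatic for $t<1$ since $\mathrm{Ric}(\omega_t) > t\omega_t$ forces the first eigenvalue of $-\Delta_{\omega_t}$ to strictly exceed $t$, so $\Delta_{\omega_t} + t$ has no zero-mean kernel; combined with the monotonicity of $\mathcal{F}_t$ along an appropriate path from any test metric to $\omega_t$, this yields the required coercivity.

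I expect the main obstacle to be $(3) \Rightarrow (1)$, where one must convert the analytic properness hypothesis into the uniform a priori estimates needed to close the continuity method. The strict inequality $t < 1$ enters crucially in two places: it guarantees invertibility of the linearized operator (openness of the continuity method), and it provides the spectral gap that lets the Moser iteration absorb the lower-order twist term $(1-t)\omega$.
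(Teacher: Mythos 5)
Your treatment of the direction ``properness $\Rightarrow$ solvability'' is sound and in fact slightly cleaner than the paper's: where the paper proves monotonicity of $\mathcal{M}+(1-s)\mathcal{J}_\omega$ along the path by a direct computation using that $\Delta_t+t$ is a negative operator, your integrated identity $\mathcal{M}(\omega_s)+(1-s)\mathcal{J}_\omega(\omega_s)=\mathrm{const}-\int_0^s\mathcal{J}_\omega\,du$ together with $\mathcal{J}_\omega\geqslant -C$ gives the same upper bound on the twisted functional along the path, and then properness bounds $\mathcal{J}_\eta(\omega_s)$ and the $C^0$ estimate follows as in Bando--Mabuchi. The trivial implication and the base-change observation (your cocycle remark, the paper's Lemma~\ref{lem:diff}) are also fine.

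The genuine gap is in the direction ``existence of $\omega$ with $\mathrm{Ric}(\omega)>t\omega$ $\Rightarrow$ properness.'' Everything you propose there --- invertibility of $\Delta_{\omega_t}+t$, uniqueness of the twisted metric, and monotonicity of $\mathcal{F}_t$ along a path (geodesic or reverse continuity path) from an arbitrary metric to the critical point --- can at best show that $\omega_t$ minimizes $\mathcal{F}_t$, i.e.\ that $\mathcal{M}+(1-t)\mathcal{J}_\alpha$ is \emph{bounded below}. That is strictly weaker than properness, which demands the quantitative gain $\mathcal{F}_t\geqslant\epsilon\mathcal{J}_\eta-C$, and nothing in your sketch produces the coefficient $\epsilon$; recall that even in the untwisted case $t=1$ passing from existence to properness is a deep theorem of Tian and Phong--Song--Sturm--Weinkove, not a consequence of Bando--Mabuchi monotonicity alone. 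The paper supplies exactly this missing mechanism: first it invokes the Chen--Tian results on weak subharmonicity of the twisted functional along almost smooth geodesics to get lower boundedness of $\mathcal{M}+(1-t)\mathcal{J}_\alpha$ (this is the key technical input, Proposition~\ref{lem:bdd}, which your appeal to ``a Bando--Mabuchi argument'' glosses over), and then, crucially, it uses \emph{openness} of Aubin's continuity method to solve $\mathrm{Ric}(\omega')=(t+\epsilon)\omega'+(1-t-\epsilon)\alpha$ for small $\epsilon>0$; lower boundedness at the larger parameter then yields
\[ \mathcal{M}+(1-t)\mathcal{J}_\eta=\epsilon\mathcal{J}_\eta+\bigl(\mathcal{M}+(1-t-\epsilon)\mathcal{J}_\eta\bigr)>\epsilon\mathcal{J}_\eta-C, \]
which is properness at $t$. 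This perturbation step is where the hypothesis $t<1$ is really used on this side of the equivalence, and it is the idea your proposal is missing; without it the argument stalls at lower boundedness and cannot close the loop back to solvability of Equation~(\ref{eq:cont}) for an arbitrary base metric.
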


The proof of the theorem follows from Lemmas \ref{lem:diff},
\ref{lem:proper} and \ref{lem:cont}.
The statement of the theorem for $t=1$ (the second statement replaced
with $\mathrm{Ric}(\omega)=\omega$) follows from the works of
Tian~\cite{Tian97} and Phong-Song-Sturm-Weinkove~\cite{PSSW}. Note that by the
following lemma all the $\mathcal{J}_\omega$ for different $\omega$ are
equivalent, so by definition they are all proper. The invariant $R(M)$
measures what the smallest multiple of $\mathcal{J}_\omega$ is that we
need to add to $\mathcal{M}$ to make it proper. 

\begin{lem}\label{lem:diff}
	If $\alpha,\alpha'$ are in the same K\"ahler class then for
	all $\omega\in c_1(M)$, we have
	\[ |(\mathcal{J}_{\alpha}-\mathcal{J}_{\alpha'})
	(\omega)| < C \]
	for some constant $C$ independent of $\omega$. 
\end{lem}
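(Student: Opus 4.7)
The plan is to write $\alpha' - \alpha = \ddb\psi$ for some smooth function $\psi$ on $M$ (which exists since $\alpha,\alpha'$ are in the same K\"ahler class and $M$ is compact K\"ahler with $H^{0,1}=0$, being Fano), then compute the variation of $\mathcal{J}_\alpha - \mathcal{J}_{\alpha'}$ along a smooth path and recognize it as an exact derivative.

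Concretely, choose any smooth path of metrics $\omega_t = \eta + \ddb\phi_t$ with $\omega_0=\eta$ and $\omega_1 = \omega$. From the defining variations,
\[ \frac{d}{dt}(\mathcal{J}_{\alpha'} - \mathcal{J}_\alpha)(\omega_t) = \int_M \dot\phi_t \,\Lambda_{\omega_t}(\alpha'-\alpha)\,\omega_t^n = n\int_M \dot\phi_t\, \ddb\psi \wedge \omega_t^{n-1}. \]
Integration by parts (using that $M$ is closed and $\ddb$ is self-adjoint in the obvious sense) rewrites this as $n\int_M \psi\, \ddb\dot\phi_t \wedge \omega_t^{n-1}$, which is precisely $\int_M \psi\,\frac{d}{dt}\omega_t^n$. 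Hence
\[ \frac{d}{dt}(\mathcal{J}_{\alpha'} - \mathcal{J}_\alpha)(\omega_t) = \frac{d}{dt}\int_M \psi\,\omega_t^n. \]

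Integrating from $0$ to $1$ and using the normalizations $\mathcal{J}_\alpha(\eta) = \mathcal{J}_{\alpha'}(\eta) = 0$, we get the closed-form expression
\[ (\mathcal{J}_{\alpha'} - \mathcal{J}_\alpha)(\omega) = \int_M \psi\,\omega^n - \int_M \psi\,\eta^n. \]
Since $\psi$ is a fixed smooth function on the compact manifold $M$ and $\int_M\omega^n = \int_M\eta^n = \int_M c_1(M)^n =: V$ depends only on the class, the right-hand side is bounded in absolute value by $2V\|\psi\|_{L^\infty}$, a constant independent of $\omega$.

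The only mild subtlety is justifying the path-independence implicit in this computation: one should check that the $1$-form on the space of K\"ahler potentials defining $\mathcal{J}_\alpha - \mathcal{J}_{\alpha'}$ is closed, which follows from the computation above (the derivative along any path equals the derivative of the well-defined functional $\omega \mapsto \int_M \psi\,\omega^n$). This is standard and the real content of the lemma is the elementary $L^\infty$ bound in the last step; there is no serious analytic obstacle.
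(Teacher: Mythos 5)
Your proof is correct and follows essentially the same route as the paper: both compute the variation of $\mathcal{J}_\alpha-\mathcal{J}_{\alpha'}$ along a path of potentials from $\eta$ to $\omega$, integrate by parts to move $\ddb$ onto the potential, and arrive at the closed formula $(\mathcal{J}_\alpha-\mathcal{J}_{\alpha'})(\omega)=\int_M\psi(\omega^n-\eta^n)$, bounded by $\sup|\psi|$ times the fixed volume. The only difference is cosmetic: you recognize the integrand as the exact derivative $\frac{d}{dt}\int_M\psi\,\omega_t^n$ along an arbitrary path, whereas the paper carries out the same integration explicitly along the linear path with a binomial expansion.
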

\begin{proof}
	Let us write $\alpha=\alpha'+\ddb\psi$, and $\omega = \eta +
	\ddb\phi$. Writing $\omega_t = \eta + t\ddb\phi$ we have
	\[ \begin{aligned}
		\frac{d}{dt}(\mathcal{J}_\alpha-\mathcal{J}_{\alpha'})(\omega_t)
		&= \int_M
		\phi\Lambda_{\omega_t}(\alpha-\alpha')\omega_t^n \\
		&= n \int_M \phi(\ddb\psi)\wedge (\eta+t\ddb\phi)^{n-1}.
	\end{aligned}\]
	We can then compute
	\[ \begin{aligned}
		(\mathcal{J}_\alpha-\mathcal{J}_{\alpha'})(\omega)
		&= n \int_0^1\int_M
		\phi(\ddb\psi)\wedge(t\omega + (1-t)\eta)^{n-1}\,dt \\
		&= n \int_0^1\int_M \psi(\ddb\phi)
		\wedge\sum_{p=1}^n\binom{n-1}{p-1} t^{p-1}(1-t)^{n-p}
		\omega^{p-1}\wedge\eta^{n-p}\,dt\\
		&= \int_M\psi(\omega-\eta) \wedge\sum_{
		p=1}^n \omega^{p-1}\wedge\eta^{n-p}\\
		&= \int_M\psi(\omega^n-\eta^n),
	\end{aligned} \]
	which is uniformly bounded in terms of $\sup|\psi|$.
\end{proof}

The following proposition, which follows directly from the work of
Chen-Tian~\cite{CT05_1} is the key technical result.

\begin{prop}\label{lem:bdd}
	If $\omega$ satisfies the equation
	\begin{equation}\label{eq:twist}
		\mathrm{Ric}(\omega) = t\omega + (1-t)\alpha,
	\end{equation}
	where $\alpha\in c_1(M)$ is positive, then the functional
	$\mathcal{M} + (1-t)\mathcal{J}_\alpha$ is bounded below.
\end{prop}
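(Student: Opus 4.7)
The plan is to show that $\omega$ is a critical point of the functional $\mathcal{F} := \mathcal{M} + (1-t)\mathcal{J}_\alpha$ and that $\mathcal{F}$ is convex along $C^{1,1}$ geodesics in the space of K\"ahler potentials. Together these force $\omega$ to be a global minimum, which is exactly the desired lower bound.

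The critical point check is a short calculation. Combining the defining variations of $\mathcal{M}$ and $\mathcal{J}_\alpha$ one gets, along any path $\omega_s = \eta + \ddb\phi_s$,
\[ \frac{d}{ds}\mathcal{F}(\omega_s) = \int_M \dot\phi_s\bigl(tn - S(\omega_s) + (1-t)\Lambda_{\omega_s}\alpha\bigr)\omega_s^n, \]
whereas taking the trace of \eqref{eq:twist} with respect to $\omega$ yields $S(\omega) = tn + (1-t)\Lambda_\omega\alpha$. So the twisted equation is exactly the Euler--Lagrange equation of $\mathcal{F}$.

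To promote this critical-point property to a global minimization statement, I would join $\omega$ to an arbitrary $\omega' \in c_1(M)$ by a $C^{1,1}$ geodesic $\{\omega_s\}_{s\in[0,1]}$ in the space of K\"ahler potentials, which exists by Chen's theorem on the homogeneous complex Monge-Amp\`ere equation. Convexity of $\mathcal{M}$ along such a geodesic is precisely the content of Chen--Tian~\cite{CT05_1}. Convexity of $\mathcal{J}_\alpha$ along a smooth geodesic (where $\ddot\phi_s = |\del\dot\phi_s|^2_{\omega_s}$) reduces, after integration by parts, to a sum of nonnegative terms thanks to the positivity of $\alpha$; this is a standard calculation in the spirit of \cite{Chen00}, \cite{SW04}. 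A convex function with a critical point attains its infimum there, so $\mathcal{F}(\omega') \geq \mathcal{F}(\omega)$, and $\mathcal{F}$ is bounded below by $\mathcal{F}(\omega)$.

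The main obstacle is purely analytic: Chen's geodesics are in general only $C^{1,1}$, so the formal second-derivative computations cannot be carried out directly. The resolution, due to Chen--Tian, is to regularize by smoother solutions of an $\varepsilon$-perturbed Monge-Amp\`ere equation and pass to the limit using careful uniform estimates. The additional term $(1-t)\mathcal{J}_\alpha$ slots into the same framework without new ideas, since its variational formula is linear in the fixed smooth form $\alpha$ and does not involve $\log\omega^n$; this is why the proposition is described as following directly from \cite{CT05_1}.
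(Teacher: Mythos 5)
Your outline --- show $\omega$ is a critical point of $\mathcal{F}=\mathcal{M}+(1-t)\mathcal{J}_\alpha$, then use convexity along weak geodesics to promote it to a global minimum --- is the same strategy as the paper's, and your critical-point computation agrees with it. The gap is in how you discharge the key analytic step: convexity of $\mathcal{M}$ along $C^{1,1}$ geodesics is \emph{not} the content of Chen--Tian \cite{CT05_1}, and was not available at the time this paper was written (it is a much later theorem of Berman--Berndtsson). What \cite{CT05_1} actually proves (Theorem 6.1.1) is that the relevant energy functionals are \emph{weakly subharmonic along ``almost smooth'' solutions} of the homogeneous complex Monge--Amp\`ere equation over a disc, via their partial regularity theory and the foliation by holomorphic discs --- not via an $\varepsilon$-regularized Monge--Amp\`ere equation as you describe. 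Consequently the step ``a convex function with a critical point attains its infimum there'' cannot be invoked as stated; one must instead run the argument of Chen--Tian's Theorem 6.2.1, which is also what controls the one-sided derivative at the endpoint of the weak path, where with only $C^{1,1}$ regularity the identification of that derivative with the Euler--Lagrange pairing is itself not automatic.

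The paper also inserts a step your proposal omits, precisely so that \cite{CT05_1} applies verbatim: by Yau's theorem \cite{Yau78} choose $\omega_0\in c_1(M)$ with $\mathrm{Ric}(\omega_0)=\alpha$, and use Chen's formula \cite{Chen00} to rewrite
\[ \mathcal{M}(\omega)+(1-t)\mathcal{J}_\alpha(\omega) = D+\int_M \log\frac{\omega^n}{\omega_0^n}\,\omega^n - t\mathcal{J}_\alpha(\omega) \]
for a constant $D$. This puts the twisted functional into exactly the form (entropy relative to a fixed volume form plus $\mathcal{J}$-type terms) treated by Chen--Tian's subharmonicity and lower-bound arguments, instead of treating $\mathcal{M}$ and $\mathcal{J}_\alpha$ separately. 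Your separate convexity claim for $\mathcal{J}_\alpha$ along smooth geodesics (using $\alpha>0$) is fine and standard, but without the rewriting above --- or without the later convexity theorems, which you may not cite here --- the appeal to \cite{CT05_1} for the $\mathcal{M}$ part does not close the argument as written.
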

\begin{proof}
	First note that $\omega$ satisfying Equation~(\ref{eq:twist}) is
	a critical point of the functional
	$\mathcal{M}+(1-t)\mathcal{J}_\alpha$. This follows directly
	from the variational formula
	\[ \frac{d}{dt}[\mathcal{M}(\omega_t) +
	(1-t)\mathcal{J}_\alpha(\omega_t)] = \int_M \dot{\phi_t}[ tn +
	(1-t)\Lambda_{\omega_t}\alpha - S(\omega_t)]\,\omega^n\]
	and taking the trace of Equation~(\ref{eq:twist}).

	By Yau's theorem~\cite{Yau78} we can find a metric $\omega_0\in
	c_1(M)$ such that $\mathrm{Ric}(\omega_0)=\alpha$. By the
	same computation as in Chen~\cite{Chen00}, we have
	\[ \mathcal{M}(\omega) + (1-t)\mathcal{J}_\alpha(\omega) =
	D + \int_M \log\frac{\omega^n}{\omega_0^n}\,\omega^n -
	t\mathcal{J}_\alpha(\omega),\]
	for some constant $D$. As in Chen-Tian, Theorem 6.1.1. this
	functional is weakly sub-harmonic on almost smooth solutions of
	the geodesic equation in the space of K\"ahler metrics. Then the
	argument in Theorem 6.2.1. implies that the functional is
	bounded below on the space of metrics in the first Chern class.
\end{proof}

\begin{lem}\label{lem:proper}
	If there exists a metric $\omega$ with
	$\mathrm{Ric}(\omega)> t\omega$, then the functional 
	$\mathcal{M} + (1-t)\mathcal{J}_\eta$ is proper.
\end{lem}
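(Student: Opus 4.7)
The plan is to view the hypothesis metric $\omega$ as a solution of the twisted equation~\eqref{eq:twist} at a slightly larger parameter than $t$, so that Proposition~\ref{lem:bdd} can be applied. Since $\mathrm{Ric}(\omega) - t\omega$ is a strictly positive $(1,1)$-form on the compact manifold $M$, there exists some $\epsilon>0$ with $\mathrm{Ric}(\omega) - (t+\epsilon)\omega$ still strictly positive. Rescaling, I would set
\[\alpha = \frac{1}{1-t-\epsilon}\bigl(\mathrm{Ric}(\omega) - (t+\epsilon)\omega\bigr),\]
which is a positive $(1,1)$-form in $c_1(M)$ satisfying $\mathrm{Ric}(\omega) = (t+\epsilon)\omega + (1-t-\epsilon)\alpha$. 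Proposition~\ref{lem:bdd} then gives that $\mathcal{M} + (1-t-\epsilon)\mathcal{J}_\alpha$ is bounded below on $c_1(M)$.

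To convert this bound into properness of $\mathcal{M} + (1-t)\mathcal{J}_\eta$, I would decompose
\[\mathcal{M} + (1-t)\mathcal{J}_\eta = \bigl[\mathcal{M} + (1-t-\epsilon)\mathcal{J}_\alpha\bigr] + (1-t-\epsilon)\bigl(\mathcal{J}_\eta - \mathcal{J}_\alpha\bigr) + \epsilon\,\mathcal{J}_\eta.\]
The first bracket is bounded below by the previous step, and since both $\alpha$ and $\eta$ lie in $c_1(M)$, Lemma~\ref{lem:diff} makes the middle difference uniformly bounded in absolute value. Combining, one obtains $\mathcal{M} + (1-t)\mathcal{J}_\eta \geq \epsilon\,\mathcal{J}_\eta - C$ on all of $c_1(M)$, which is exactly the required properness with constant $\epsilon$.

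The main obstacle — really the only substantive point — is the initial step of extracting a positive slack $\epsilon>0$ from the strict form inequality $\mathrm{Ric}(\omega) > t\omega$. This strictness is what upgrades the mere lower bound supplied by Proposition~\ref{lem:bdd} into genuine properness; with only the weak inequality $\mathrm{Ric}(\omega) \geq t\omega$ one would be forced to take $\epsilon = 0$ and the argument would collapse. Everything else is formal bookkeeping via Lemma~\ref{lem:diff} to swap the reference metric inside $\mathcal{J}$.
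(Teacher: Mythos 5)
Your argument is correct, and it differs from the paper's proof in one substantive way. The paper fixes the twist form once and for all by writing $\mathrm{Ric}(\omega)=t\omega+(1-t)\alpha$, obtains boundedness of $\mathcal{M}+(1-t)\mathcal{J}_\alpha$ from Proposition~\ref{lem:bdd}, and then gets the extra slack $\epsilon$ by invoking the openness statement in Aubin's continuity method to produce a \emph{new} metric $\omega'$ solving $\mathrm{Ric}(\omega')-(t+\epsilon)\omega'=(1-t-\epsilon)\alpha$, to which Proposition~\ref{lem:bdd} is applied again. You instead keep the original metric $\omega$ and perturb the \emph{twist form}: since $M$ is compact, the strict inequality gives $\mathrm{Ric}(\omega)-(t+\epsilon)\omega>0$ for small $\epsilon$, so $\alpha_\epsilon=\frac{1}{1-t-\epsilon}\bigl(\mathrm{Ric}(\omega)-(t+\epsilon)\omega\bigr)$ is a positive form in $c_1(M)$ (one should note explicitly that $\epsilon<1-t$ and that compactness upgrades pointwise strict positivity to a uniform bound $\mathrm{Ric}(\omega)-t\omega\geqslant c\,\omega$), and Proposition~\ref{lem:bdd} applies directly at parameter $t+\epsilon$. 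The concluding bookkeeping --- splitting off $\epsilon\mathcal{J}_\eta$ and using Lemma~\ref{lem:diff} to swap $\mathcal{J}_{\alpha_\epsilon}$ for $\mathcal{J}_\eta$ --- is the same in both proofs. What your route buys is a more elementary and self-contained argument: no implicit function theorem or PDE openness is needed, only the observation that strictness of the curvature inequality provides the $\epsilon$ of room. What the paper's route buys is an actual solution of the twisted equation at the larger parameter $t+\epsilon$ with the same $\alpha$, which is the statement that naturally feeds into continuity-method arguments elsewhere; but for the purpose of this lemma your shortcut is perfectly adequate.
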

\begin{proof}
	Let us write
	\[ \mathrm{Ric}(\omega) = t\omega + (1-t)\alpha,\]
	where $\alpha$ is a positive form in $c_1(M)$. It
	follows from the previous proposition 
	that the functional $\mathcal{M} + (1-t)\mathcal{J}_\alpha$
	is bounded from below. By
	Lemma~\ref{lem:diff} it follows that
	$\mathcal{M}+(1-t)\mathcal{J}_\eta$ is also bounded from below.

	In order to show that it is proper, we use a perturbation
	argument. We want to show that for sufficiently small
	$\epsilon>0$ we can find $\omega'$ such that
	\[ \mathrm{Ric}(\omega') - (t+\epsilon)\omega' =
	(1-t-\epsilon)\alpha.\]
	This is just the openness statement in Aubin's continuity
	method~\cite{Aub84}.
	Then the previous argument implies that $\mathcal{M} +
	(1-t-\epsilon)\mathcal{J}_\eta$ is bounded below, so
	\[ \mathcal{M}+(1-t)\mathcal{J}_\eta = \epsilon\mathcal{J}_\eta
	+ (\mathcal{M} + (1-t-\epsilon)\mathcal{J}_\eta) >
	\epsilon\mathcal{J}_\eta - C,\]
	which is what we wanted to prove.
\end{proof}

\begin{lem} \label{lem:cont}
	If the functional $\mathcal{M}+  (1-s)\mathcal{J}_\eta$ is
	proper, then for any metric $\omega\in c_1(M)$ we can find an
	$\omega_s$ such that
	\[ \mathrm{Ric}(\omega_s) = s\omega_s + (1-s)\omega,\]
	that is, we can solve along the continuity method up to time $s$. 
\end{lem}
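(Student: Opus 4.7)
The plan is the standard method of continuity. Define
\[ I = \{t \in [0,s] : \text{Equation (\ref{eq:cont}) has a solution}\}. \]
Yau's theorem gives $0 \in I$, and openness of $I$ in $[0,1)$ is routine: the linearization of (\ref{eq:cont}) at a solution $\phi_t$ is $\Delta_{\omega_t} + t$, and for $t < 1$ the identity $\mathrm{Ric}(\omega_t) = t\omega_t + (1-t)\omega$ together with the Lichnerowicz--Bochner estimate forces the first non-zero eigenvalue of $-\Delta_{\omega_t}$ to exceed $t$, so this operator is invertible on functions of mean zero and the implicit function theorem applies. The whole content of the lemma therefore lies in showing that $I$ is closed in $[0,s]$, which by the standard theory of complex Monge--Amp\`ere equations (Yau's $C^2$ estimate, Evans--Krylov, bootstrap) reduces to a uniform $C^0$ bound on $\phi_t$ for $t \in I$.

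To obtain this $C^0$ bound from the properness hypothesis, I would first derive an a priori energy bound along the path. Taking the trace of (\ref{eq:cont}) yields $S(\omega_t) = tn + (1-t)\Lambda_{\omega_t}\omega$, and substituting this into the variational formulas for $\mathcal{M}$ and $\mathcal{J}_\omega$ produces the clean identity
\[ \frac{d}{dt}\bigl[ \mathcal{M}(\omega_t) + (1-t)\mathcal{J}_\omega(\omega_t) \bigr] = -\mathcal{J}_\omega(\omega_t), \]
which, via Lemma \ref{lem:diff} and the non-negativity of Aubin's $I-J = \mathcal{J}_\eta$, is bounded above by a constant. Integrating from $0$ to $t$ then gives $\mathcal{M}(\omega_t) + (1-t)\mathcal{J}_\omega(\omega_t) \leq C$ along the path. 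Replacing $\mathcal{J}_\omega$ by $\mathcal{J}_\eta$ via Lemma \ref{lem:diff} and using $1-s \leq 1-t$ together with $\mathcal{J}_\eta \geq 0$ yields $\mathcal{M}(\omega_t) + (1-s)\mathcal{J}_\eta(\omega_t) \leq C'$, so properness of $\mathcal{M} + (1-s)\mathcal{J}_\eta$ supplies a uniform bound on $\mathcal{J}_\eta(\omega_t)$.

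The last step is to upgrade this $\mathcal{J}_\eta$-bound to $\|\phi_t\|_{C^0} \leq C$. After the normalization $\sup_M \phi_t = 0$, the usual $L^1$ estimate for $\eta$-plurisubharmonic functions bounds $\int_M (-\phi_t)\,\eta^n$ in terms of $\mathcal{J}_\eta(\omega_t)$; applying Ko\l odziej's $L^\infty$ theorem to the Monge--Amp\`ere form of (\ref{eq:cont}), or equivalently an $\alpha$-invariant/Moser--Trudinger argument in the spirit of Tian, then converts this into the desired uniform $C^0$ estimate. The main obstacle in the whole argument is really the derivation of the monotonicity identity in the second paragraph together with the careful bookkeeping around $\mathcal{J}_\omega$ versus $\mathcal{J}_\eta$; once these are in place, the remaining ingredients are by now standard tools in the K\"ahler--Einstein literature.
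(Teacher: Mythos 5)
Your proposal is correct in outline and reaches the same two milestones as the paper's proof --- a uniform bound on $\mathcal{J}_\eta(\omega_t)$ along the continuity path obtained from properness, followed by the standard conversion of that bound into the $C^0$ estimate --- but it derives the energy bound by a genuinely different computation. The paper fixes the coefficient $1-s$ and shows $\frac{d}{dt}\bigl[\mathcal{M}(\omega_t)+(1-s)\mathcal{J}_\omega(\omega_t)\bigr]\leqslant 0$ for $t<s$ by differentiating the Monge--Amp\`ere equation to get $\Delta_t\dot{\phi_t}=-\phi_t-t\dot{\phi_t}$, integrating by parts, and using that $\Delta_t+t$ is negative because $\mathrm{Ric}(\omega_t)\geqslant t\omega_t$; this yields genuine monotonicity of the fixed functional but requires the spectral estimate. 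You instead let the coefficient vary with $t$: since $\omega_t$ is a critical point of $\mathcal{M}+(1-t)\mathcal{J}_\omega$ at each fixed $t$ (take the trace of the equation, exactly as in Proposition~\ref{lem:bdd}), the chain rule gives $\frac{d}{dt}\bigl[\mathcal{M}(\omega_t)+(1-t)\mathcal{J}_\omega(\omega_t)\bigr]=-\mathcal{J}_\omega(\omega_t)$, and this is bounded above by Lemma~\ref{lem:diff} together with $\mathcal{J}_\eta\geqslant 0$; integrating on the bounded interval and using $(1-t)\mathcal{J}_\omega\geqslant(1-s)\mathcal{J}_\omega-C$ gives the same upper bound on $\mathcal{M}+(1-s)\mathcal{J}_\eta$ along the path, after which properness bounds $\mathcal{J}_\eta(\omega_t)$ just as in the paper. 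Your route avoids the eigenvalue estimate entirely, needing only the critical-point identity and the lower bound on $\mathcal{J}_\omega$; the paper's route buys actual monotonicity of $\mathcal{M}+(1-s)\mathcal{J}_\omega$, which is of independent interest.

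One point to tighten: the final step as you phrase it --- Ko\l odziej's theorem applied to (\ref{eq:cont}) starting from an $L^1$ bound on the sup-normalized potential --- does not work as stated. Ko\l odziej's estimate needs an $L^p$ bound, $p>1$, on the density $e^{h_\omega-t\phi_t}$, and exponential integrability of $-t\phi_t$ does not follow from the $L^1$ (equivalently $\mathcal{J}_\eta$) bound alone unless $t$ lies below the $\alpha$-invariant threshold, which is precisely what cannot be assumed here; moreover (\ref{eq:cont}) is not invariant under $\phi_t\mapsto\phi_t+c$, so the additive constant must be recovered separately. The correct standard conversion, which is what the paper cites from \cite{BM85}, bounds $I(\phi_t)$ by the $\mathcal{J}_\eta$ bound, turns this into an oscillation bound via Green's function estimates for $\omega$ and $\omega_t$ (the latter using $\mathrm{Ric}(\omega_t)\geqslant t\omega_t$ with $t$ bounded away from $0$, which suffices since closedness is only needed at a positive time), and then pins down the constant from the normalization built into the equation. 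With that substitution, your argument is complete.
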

\begin{proof} This is a slight extension of a result in~\cite{Tian97}
	(see also~\cite{BM85}).
	Using Yau's estimates~\cite{Yau78} we only
	need to show that if the path
	of metrics
	$\omega_t = \omega+\ddb\phi_t$ satisfies
	\begin{equation}\label{eq:MA}
		\omega_t^n = e^{h_\omega-t\phi_t}\omega^n
	\end{equation}
	for $t < s$, then there is a uniform $C^0$ bound $\sup|\phi_t| <
	C$. For this we compute the derivative
	\[ \frac{d}{dt}\big[\mathcal{M}(\omega_t) +
	(1-s)\mathcal{J}_\omega(\omega_t)\big].\]
	Differentiating Equation (\ref{eq:MA}) we get
	\[ \Delta_t\dot{\phi_t} = -\phi_t-t\dot{\phi_t},\]
	where $\Delta_t$ is the Laplace operator of the metric
	$\omega_t$. Using the formula
	\[ S(\omega_t) = tn + (1-t)\Lambda_{\omega_t}\omega,\]
	we can compute 
	\[ \begin{aligned}
		\frac{d}{dt}\big[\mathcal{M}(\omega_t) +
		(1-s)\mathcal{J}_\omega (\omega_t)\big] &= \int_M
		\dot{\phi_t} \big[ -(1-t)\Lambda_{\omega_t}\omega +
		(1-s)\Lambda_{\omega_t}\omega - c\big]\omega_t^n \\
		&= (s-t)\int_M\dot{\phi_t}\Lambda_{\omega_t}(
		\omega_t-\omega)\,\omega_t^n \\
		&= (s-t)\int_M\dot{\phi_t}\Delta_t\phi_t\,\omega_t^n\\
		&= (s-t)\int_M (-\phi_t-t\dot{\phi_t})\phi_t\,\omega_t^n
		\\ &= (s-t)\left[-\int_M\phi_t^2\omega_t^n + t\int_M
		\dot{\phi_t}(\Delta_t\dot{\phi_t} + t\dot{\phi_t})
		\omega_t^n\right] \\
		&\leqslant 0,
	\end{aligned}\]
	as long as $t < s$.
	Here we have used that $\Delta_t + t$ is a negative operator
	since $\mathrm{Ric}(\omega_t)\geqslant t\omega_t$.
	
	Since $\mathcal{M} + (1-s)\mathcal{J}_\omega$ is proper (again
	using Lemma~\ref{lem:diff} to relate the different $\mathcal{J}$
	functionals), we obtain a uniform bound
	\[ \mathcal{J}_\omega(\omega_t) < C\]
	for $t < s$. As in \cite{BM85} this gives the required $C^0$
	estimate.
\end{proof}

\section{Bounding the invariant $R(M)$}
It is an interesting problem to find bounds on $R(M)$ for a given
Fano manifold $M$. First let us briefly discuss lower bounds.
Clearly when $M$ admits a K\"ahler-Einstein metric
then $R(M)=1$. The converse however is not true. For instance the
unstable deformations of the Mukai 3-fold given by Tian~\cite{Tian97} have
$R(M)=1$. To see this first recall that
the Mukai 3-fold $M_0$ admits a K\"ahler-Einstein
metric (see Donaldson~\cite{Don07}), so for any $t < 1$ there is a metric
$\omega_0$ on $M_0$ with 
\[\mathrm{Ric}(\omega_0) > t\omega_0.\]
Tian's example is a 
manifold $M$ such that $M_0$ has arbitrarily small deformations which
are biholomorphic to $M$ (there exists a degeneration of $M$ to
$M_0$). With such small deformations we can obtain a metric $\omega$ on
$M$, such that $\mathrm{Ric}(\omega)>t\omega$ still holds. Since we can
do this for any $t$, this implies
that $R(M)=1$. Alternatively, it is well-known that $R(M)=1$ if the Mabuchi 
energy is bounded from below (see \cite{BM85}), and Chen~\cite{Chen08} 
showed
that this is the case for the manifold $M$. 
More generally we have the following.
\begin{prop}\label{prop:def}
	If $M$ is a K\"ahler-Einstein manifold and $M'$ is a
	sufficiently small deformation of the complex structure of $M$,
	then $R(M')=1$. 
\end{prop}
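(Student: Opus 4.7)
The plan is to use Lemma~\ref{lem:proper}: it suffices to show that for every $t<1$ there exists a K\"ahler metric $\omega'\in c_1(M')$ with $\mathrm{Ric}(\omega')>t\omega'$, since this gives $R(M')\geqslant t$, and letting $t\to 1$ yields $R(M')=1$. Let $\omega_{KE}$ denote the K\"ahler-Einstein metric on $M$, so that $\mathrm{Ric}(\omega_{KE})=\omega_{KE}$. The key observation is that the inequality $\mathrm{Ric}(\omega_{KE})-t\omega_{KE}=(1-t)\omega_{KE}>0$ is strict, leaving room for a perturbation argument.

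To make this rigorous I would realise the deformation as a smooth holomorphic family $\pi:\mathcal{X}\to B$ over a small ball $B\subset\mathbb{C}^N$ with central fiber $\pi^{-1}(0)=M$, so that $M'=\pi^{-1}(b)$ for some $b$ close to $0$. The volume form $\omega_{KE}^n$ determines a smooth Hermitian metric $h_0$ on the anticanonical bundle $-K_M$ whose curvature is $\omega_{KE}$. Extend $h_0$ to a smooth Hermitian metric $h$ on the relative anticanonical bundle $-K_{\mathcal{X}/B}\to\mathcal{X}$, which is possible since $M\subset\mathcal{X}$ is a closed submanifold. The restrictions $h_b:=h|_{M_b}$ give a smoothly varying family of Hermitian metrics on $-K_{M_b}$, whose curvatures $\omega_b$ are closed real $(1,1)$-forms on $M_b$ lying in $c_1(M_b)$, with $\omega_0=\omega_{KE}$.

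Since positivity of $(1,1)$-forms is an open condition and $\omega_b$ varies smoothly in $b$, the form $\omega_b$ is K\"ahler for $b$ in a neighborhood of $0$. The Ricci form $\mathrm{Ric}(\omega_b)$ likewise depends smoothly on $b$, and at $b=0$ we have the strict inequality $\mathrm{Ric}(\omega_0)-t\omega_0=(1-t)\omega_{KE}>0$. By continuity this persists for $b$ sufficiently close to $0$, giving $\mathrm{Ric}(\omega_b)>t\omega_b$ on $M_b$, which is what we need. The main point to verify carefully is the smooth dependence on $b$, which is why I prefer the relative anticanonical bundle approach over, say, taking the $(1,1)$-projection of $\omega_{KE}$ with respect to each nearby complex structure: the latter is not closed in general and would require an additional correction step to land in $c_1(M_b)$.
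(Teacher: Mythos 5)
There is a genuine gap, and it is a quantifier problem that cannot be patched within your framework. Your continuity argument produces, for each fixed $t<1$, a neighborhood $U_t$ of $0$ in the base $B$ such that $\mathrm{Ric}(\omega_b)>t\omega_b$ for $b\in U_t$; but $U_t$ shrinks as $t\to 1$, since the room for perturbation is $(1-t)\omega_{KE}$. The proposition requires the opposite order of quantifiers: a single small deformation $M'=M_b$, $b\neq 0$ fixed, for which \emph{every} $t<1$ is attained. Your construction hands each fixed $b$ only the one metric $\omega_b$, and a single metric $\omega$ on $M'$ satisfying $\mathrm{Ric}(\omega)>t\omega$ for all $t<1$ would satisfy $\mathrm{Ric}(\omega)\geqslant\omega$, hence (by the maximum principle applied to the Ricci potential) $\mathrm{Ric}(\omega)=\omega$, i.e.\ $M'$ would be K\"ahler--Einstein. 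Since there exist small deformations of K\"ahler--Einstein manifolds that admit no K\"ahler--Einstein metric (Tian's deformations of the Mukai 3-fold, discussed just before the proposition), no argument producing a smooth family $b\mapsto\omega_b$ of metrics can prove the statement: what is needed is a \emph{sequence} of metrics on the fixed $M'$, with Ricci lower bounds tending to $1$, which necessarily degenerates.

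The missing ingredient, and the actual content of the paper's proof, is the structure result of \cite{GSz08} on the Kuranishi space of a K\"ahler--Einstein manifold: small deformations are parametrised by an analytic subset $Z$ of a ball with a linear $Aut(M)$-action, polystable points of $Z$ correspond to deformations admitting K\"ahler--Einstein metrics, and if the point $z$ corresponding to $M'$ is not polystable, then the closure of its $Aut(M)$-orbit contains a polystable point $z_0\in Z$ whose manifold $M_0$ is K\"ahler--Einstein. Because the orbit of $z$ accumulates at $z_0$, the fixed manifold $M'$ is biholomorphic to \emph{arbitrarily} small deformations of $M_0$; then, for each $t<1$ separately, one runs exactly the perturbation you describe, but starting from a realisation of $M'$ close enough to $M_0$ (depending on $t$), obtaining a different metric on $M'$ for each $t$ and hence $R(M')=1$. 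So your openness step is the right local tool (and matches the paper's use of it), but without the orbit-closure/polystability input it only yields $R(M_b)\geqslant t_0(b)$ for some $t_0(b)<1$ with $t_0(b)\to 1$ as $b\to 0$, not the claimed $R(M')=1$.
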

\begin{proof}
	This follows from the proof of the main result in \cite{GSz08}.
	It is shown there that there exists a small ball
	$B\subset\mathbf{C}^k$ with a linear
	action of the group of holomorphic automorphisms $Aut(M)$ on $B$
	such that points in a complex analytic subset $Z\subset B$ give
	all the 
	small deformations of the complex structure of $M$ which have
	the same first Chern class as $M$ (ie. $Z$ is a subset of the 
	Kuranishi
	space~\cite{Kur65} of $M$) and manifolds in the same $Aut(M)$
	orbit are biholomorphic. Moreover the points in $Z$ which are
	polystable for the action of $Aut(M)$ (ie. their orbit is closed
	in $\mathbf{C}^k$) correspond to deformations of $M$ which admit
	K\"ahler-Einstein metrics. Suppose that the small deformation
	$M'$ corresponds to a point $z\in Z$. Either $z$ is polystable,
	in which case $M'$ admits a K\"ahler-Einstein metric, or there
	exists a polystable point $z_0$ 
	in the closure of the $Aut(M)$-orbit of
	$z$, such that also $z_0\in Z$. This $z_0$ is obtained by
	minimizing the norm over the $Aut(M)$-orbit of $z$. Let $M_0$ be
	the manifold corresponding to $z_0$ (it may be that $M_0=M$), so
	$M_0$ admits a K\"ahler-Einstein metric.
	Since $z_0$ is in the closure of the orbit of $z$, we can
	realise $M'$ as an arbitrarily small deformation of $M_0$. The
	above argument then shows that $R(M')=1$. 
\end{proof}

In addition one can give a lower bound in terms of the alpha
invariant $\alpha(M)$ or its equivariant version (Tian~\cite{Tian87}),
namely $R(M)\geqslant \alpha(M)\cdot \frac{n+1}{n}$ as long as this is
no greater than 1, where $n$ is the
complex dimension. 

There is much less known about upper bounds for $R(M)$.
The problem was briefly studied in the paper of
Tian~\cite{Tian92}, and he found some bounds in terms of the tangent bundle.
For $\mathbf{P}^2$ blown up in one point he found the upper bound
$15/16$. In the next section we will show that in fact 
$R(M_1)=6/7$ where $M_1$
is $\mathbf{P}^2$ blown up in one point. For the blowup in 2 points we
show $R(M_2)\leqslant 21/25$. 

To obtain upper bounds we can
use the recent work of Stoppa~\cite{Sto08_1}. The
basic observation is that the equation
\[ \mathrm{Ric}(\omega) = t\omega + (1-t)\alpha \]
is a twisted cscK equation (or generalised K\"ahler-Einstein equation in
the terminology of Song-Tian~\cite{ST07}). Stoppa gives an obstruction to
solving this equation, generalising the slope stability obstruction to
the existence of cscK metrics due to Ross-Thomas~\cite{RT06}. As we will
see this gives a
good bound for $\mathbf{P}^2$ blown up in 1 point, but for the blowup in
2 points it does not give anything because it is slope stable (see
Panov-Ross~\cite{PR07}). So we now give another upper bound
which in some sense is more basic. Both are based on constructing
sequences of metrics along which $\mathcal{M}+(1-t)\mathcal{J}_\omega$
is not bounded from below for certain $t$. Stoppa uses a metric
degeneration which models deformation to the normal cone, whereas we
look at one parameter families of metrics induced by holomorphic
vector fields. 

\begin{prop}\label{prop:limJ}
	Fix a metric $\omega$ such
	that $\mathrm{Ric}(\omega)=\alpha$ is a positive form. Let $H$
	be a smooth real valued function on $M$ and 
	suppose that $X=\nabla H$ is a holomorphic
	vector field. Write $f_t:M\to M$ for the one-parameter group of
	diffeomorphisms generated by $X$. Let $\omega_t=f_t^*\omega$. Then
	\[ \lim_{t\to\infty} \frac{d}{dt} \mathcal{J}_\alpha(\omega_t) =
	\int_M H(S(\omega)-n)\,\omega^n + \lim_{t\to\infty} \int_M
	(f_t^{-1})^*(\Delta H)\,\omega^n.\]
	Here $\Delta$ is the Laplacian with respect to the metric
	$\omega$. 

	It follows that 
	\[ \lim_{t\to\infty} \frac{d}{dt}
	\left(\mathcal{M}(\omega_t) + (1-s)
	\mathcal{J}_\alpha(\omega_t)\right) = s\int_M
	H(n-S(\omega))\,\omega^n + (1-s)K\,\mathrm{Vol}(M),\]
	where $K$ is the divergence of the vector field $X$ on 
	the submanifold where $H$ achieves its minimum. If this limit is
	negative, then $\mathcal{M}+(1-s)\mathcal{J}_\alpha$ is not
	bounded below, and so $R(M)\leqslant s$.
\end{prop}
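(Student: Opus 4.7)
My plan is to separately compute $\frac{d}{dt}\mathcal{M}(\omega_t)$ and $\frac{d}{dt}\mathcal{J}_\alpha(\omega_t)$ and then combine. Since $X=\nabla H$ is a real holomorphic vector field, one checks $\mathcal{L}_X\omega = \ddb H$, so along the orbit $\omega_t = f_t^*\omega$ the potential variation is $\dot\phi_t = f_t^*H$ (modulo constants). The first step is to apply the variational formulas for $\mathcal{M}$ and $\mathcal{J}_\alpha$ and reduce everything to integrals over $M$ against the fixed metric $\omega$ by changing variables via $f_t$.

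For $\mathcal{M}$, scalar curvature and volume form are natural under the biholomorphism $f_t$, i.e.\ $S(f_t^*\omega) = f_t^*S(\omega)$ and $\omega_t^n = f_t^*\omega^n$, so a single change of variable immediately gives $\frac{d}{dt}\mathcal{M}(\omega_t) = \int_M H(n-S(\omega))\omega^n$, which is independent of $t$ and hence equals its own limit. For $\mathcal{J}_\alpha$ the analogous manipulation leaves a copy of $(f_t^{-1})^*\alpha$ because $\alpha$ itself is not pulled back. Writing $(f_t^{-1})^*\alpha = \alpha + \ddb\tilde\rho_t$, splitting, and using $\Lambda_\omega\alpha = S(\omega)$ produces
\[
\frac{d}{dt}\mathcal{J}_\alpha(\omega_t) = \int_M H(S(\omega)-n)\omega^n + \int_M H\,\Delta_\omega\tilde\rho_t\,\omega^n.
\]
The next step is to show that the second term equals $\int_M(f_t^{-1})^*(\Delta H)\,\omega^n$ for every $t$. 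I would verify this by showing both sides solve the same initial value problem in $t$: both vanish at $t=0$, and the Ricci transformation law together with the Jacobian ODE $\frac{d}{dt}\log\bigl((f_t^{-1})^*\omega^n/\omega^n\bigr) = -(f_t^{-1})^*\Delta H$ give $\dot{\tilde\rho}_t = (f_t^{-1})^*\Delta H$, while $\mathcal{L}_X\omega^n = (\Delta H)\omega^n$ gives $\frac{d}{dt}\omega_t^n = f_t^*((\Delta H)\omega^n)$; both derivatives then equal $\int_M \Delta H\cdot(f_t^{-1})^*\Delta H\,\omega^n$.

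To take the limit $t\to\infty$, I would argue that $\omega_t^n$ concentrates on the critical submanifold $Z\subset M$ where $H$ attains its minimum. Since $\frac{d}{dt}H(f_t(p)) = |\nabla H|^2(f_t(p))\geqslant 0$, the function $H$ is non-decreasing along the forward flow and strictly increasing off the fixed point set, so $f_t^{-1}(p)\to Z$ for every $p$ outside the unstable set of $Z$, which has measure zero. On $Z$ the divergence $\Delta H = \operatorname{div}(X)$ takes a constant value $K$, since $X$ vanishes there and the trace of $\nabla X$ on the normal bundle is a sum of weights of the $\mathbf{C}^*$-action generated by $X$, locally constant on $Z$. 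Applying dominated convergence to the bounded function $\Delta H\circ f_t^{-1}$ gives $\lim_{t\to\infty}\int_M(f_t^{-1})^*\Delta H\,\omega^n = K\cdot\mathrm{Vol}(M)$. Adding the $\mathcal{M}$ and $(1-s)\mathcal{J}_\alpha$ contributions collapses the $\int_M H(S-n)\omega^n$ terms into $s\int_M H(n-S)\omega^n$ and produces the stated formula.

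For the final assertion, if the limit $L := s\int_M H(n-S(\omega))\omega^n + (1-s)K\,\mathrm{Vol}(M)$ is strictly negative, then for all sufficiently large $t$ the derivative $\frac{d}{dt}[\mathcal{M}+(1-s)\mathcal{J}_\alpha](\omega_t)$ is bounded above by $L/2<0$, so $[\mathcal{M}+(1-s)\mathcal{J}_\alpha](\omega_t)\to-\infty$ along the family $\omega_t$. In particular the functional is not bounded below. If $R(M)>s$ held, then Lemma~\ref{lem:proper} combined with Lemma~\ref{lem:diff} (which interchanges $\mathcal{J}_\alpha$ and $\mathcal{J}_\eta$ up to a bounded error) would force $\mathcal{M}+(1-s)\mathcal{J}_\alpha$ to be proper, hence bounded below, a contradiction. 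Hence $R(M)\leqslant s$. I expect the main obstacle to be the ODE identification of the second term in $\frac{d}{dt}\mathcal{J}_\alpha(\omega_t)$ with $\int_M(f_t^{-1})^*\Delta H\,\omega^n$, which is sensitive to sign and normalization conventions and requires carefully matching the Ricci-pullback formula with the Lie derivative formula for $\omega^n$.
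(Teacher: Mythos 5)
Your proposal is correct and follows essentially the same route as the paper: after changing variables, your relative Ricci potential $\tilde\rho_t=-\log\bigl((f_t^{-1})^*\omega^n/\omega^n\bigr)$ and the Jacobian ODE are exactly the ingredients the paper uses when it rewrites the twisting term as $\frac{d}{dt}\int_M\log(\omega_t^n/\omega^n)\,\omega_t^n$ and then as $\int_M(f_t^{-1})^*(\Delta H)\,\omega^n$, and the limit is taken the same way (backward gradient flow converging to the minimum set where $\mathrm{div}\,X=K$, plus bounded convergence), with the same final appeal to Lemmas \ref{lem:proper} and \ref{lem:diff}. The only points stated more loosely than in the paper are that the set of points not flowing to the minimum set has measure zero and that $K$ is one constant; these need the Morse--Bott/moment-map facts (even indices, connectedness of the minimum set) that the paper cites from McDuff--Salamon, not just monotonicity of $H$ along the flow.
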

\begin{proof}[Proof of Proposition \ref{prop:limJ}]
	Let $\omega_t = f_t^*\omega$ and
	write $\omega_t=\omega+\ddb\phi_t$. Then $\dot{\phi_t}= f_t^*H$.
	We compute
	\[\begin{aligned}
		\frac{d}{dt} \mathcal{J}_\alpha(\omega_t) &=
		\int_M\dot{\phi_t}\left(\Lambda_{\omega_t}\alpha -
		n\right)\,\omega_t^n \\ &= \int_M\dot{\phi_t}
		\left[\Lambda_{\omega_t}(\mathrm{Ric}(\omega)-\mathrm{Ric}
		(\omega_t))\right]\,\omega_t^n + \int_M \dot{\phi_t}
		(S(\omega_t) - n)\,\omega_t^n.
	\end{aligned}\]
	The second term is simply 
	\[ \int_M H(S(\omega) - n)\,\omega^n.\]
	For the first term we have
	\[ \begin{aligned}
		\int_M \dot{\phi_t}\left[\Lambda_{\omega_t}(\mathrm{Ric}
		(\omega)-\mathrm{Ric}(\omega_t))\right]\,\omega_t^n &=
		\int_M \dot{\phi_t}\Delta_t\log\frac{\omega_t^n}{
		\omega^n}\,\omega_t^n \\
		&= \int_M (\Delta_t\dot{\phi_t}) \log\frac{\omega_t^n}{
		\omega^n}\,\omega_t^n \\
		&= \frac{d}{dt}\int_M \log\frac{\omega_t^n}{\omega^n}
		\,\omega_t^n = 
		\frac{d}{dt} \int_M \log\frac{\omega^n}{ \omega_{-t}^n}
		\omega^n.
	\end{aligned}\]
	We have written $\omega_{-t} = (f_t^{-1})^*\omega$. Then
	$\frac{d}{dt}\omega_{-t} = -\ddb (f_t^{-1})^*H$ since it is
	the same as flowing along $-\nabla H$. Therefore we obtain
	\[ \frac{d}{dt}\int_M -\log\frac{\omega_{-t}^n}{\omega^n}
	\,\omega^n = \int_M (f_t^{-1})^*(\Delta H)\,\omega^n.\]
	The first part of the result follows.

	For convenience let us assume that $\inf H=0$. 
	Note first of all that $J\nabla H$ is a
	Killing field, and so it generates a torus action. In particular
	$H$ is a component of the moment map for a torus action. It
	follows that $H$ is a Morse-Bott function with even-dimensional
	critical manifolds of even index (see McDuff-Salamon~\cite{MS98}),
	and so $H^{-1}(0)$ is a connected complex submanifold, and in
	addition 
	\[ \lim_{t\to\infty} f_t^{-1}(x) \in H^{-1}(0) \]
	for a dense open set in $M$. 
	For a point $y\in H^{-1}(0)$, the Laplacian $\Delta H(y)$ is the
	divergence of the vector field $X$, which is independent of the
	metric since $X(y)=0$. It is just given by the total weight of
	the action on the normal bundle of $H^{-1}(0)$, or alternatively
	the weight of the action on the anticanonical bundle at $y$.
	This is independent of the choice of $y\in H^{-1}(0)$, and we
	denote it by $K$. It follows that 
	\[ \lim_{t\to\infty}(f_t^{-1})^*(\Delta H)(x) = K, 
	\text{ for a.e. }x\in M.\]
	Since $(f_t^{-1})^*(\Delta H)$ is uniformly bounded, independent
	of $t$, it follows that
	\[ \lim_{t\to\infty}
	\int_M (f_t^{-1})^*(\Delta H)\,\omega^n =
	K\int_M\,\omega^n.\]
	At the same time
	\[\frac{d}{dt}\mathcal{M}(\omega_t) =  \int_M
	\dot{\phi_t}(n-S(\omega_t))\,\omega_t^n = \int_M
	H(n-S(\omega))\, \omega^n.\] 
	The proposition follows. 
\end{proof}

\subsection{$\mathbf{P}^2$ blown up in one point}
Let $M_1$ be $\mathbf{P}^2$ blown up in one point. In this section we
prove the following.
\begin{thm} $R(M_1)=6/7$.
\end{thm}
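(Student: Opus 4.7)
The proof has two parts: the upper bound $R(M_1) \leq 6/7$ and the lower bound $R(M_1) \geq 6/7$.

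For the upper bound, my plan is to apply Proposition \ref{lem:bdd}'s descendant Proposition \ref{prop:limJ}. The manifold $M_1$ is toric, and in particular carries a nontrivial holomorphic vector field $X$ generated by a Hamiltonian $H$ (for instance, coming from the $\mathbf{C}^*$-action whose fixed locus is the exceptional curve $E$ together with the section at infinity in the ruling $M_1 \to \mathbf{P}^1$). First I would fix a $U(2)$-invariant base metric $\omega \in c_1(M_1)$ chosen so that $\alpha := \mathrm{Ric}(\omega)$ is positive and torus-invariant, which puts us in the setting of the proposition. The proposition then gives the upper bound $R(M_1) \leq s$ provided
\[ s \int_M H(n-S(\omega))\,\omega^n + (1-s) K\, \mathrm{Vol}(M) < 0, \]
with $K$ the weight of the action on the anticanonical bundle along $H^{-1}(\min H)$. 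The goal is to evaluate these two quantities and solve for the critical $s$. The $U(2)$-symmetry reduces both the volume-type integral and the weighted integral of $n - S(\omega)$ to one-dimensional integrals over the moment interval of the $\mathbf{C}^*$-action, and the numerical outcome of optimizing over the choice of linear Hamiltonian $H$ should give exactly $s = 6/7$.

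For the lower bound, the plan is to construct, for every $s < 6/7$, an explicit $U(2)$-invariant Kähler metric $\omega_s \in c_1(M_1)$ with $\mathrm{Ric}(\omega_s) > s\omega_s$. The tool is the Calabi (momentum) ansatz: viewing $M_1 = \mathbf{P}(\mathcal{O}\oplus\mathcal{O}(-1))$, $U(2)$-invariant Kähler metrics are parametrized by a smooth positive function $\varphi$ on an interval $I = [a,b]$ with the standard boundary data ensuring smooth extension across the two special sections. In this ansatz the eigenvalues of the Ricci endomorphism have explicit expressions in $\varphi$, $\varphi'$ and $\varphi''$, and the condition $\mathrm{Ric}(\omega) \geq s\omega$ becomes a pair of pointwise inequalities on $I$. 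The task is then to produce, for each $s < 6/7$, a momentum profile $\varphi_s$ satisfying these inequalities together with the boundary conditions; as $s \to 6/7$ one expects $\varphi_s$ to approach a degenerate profile corresponding to a Kähler-Ricci soliton on $M_1$, which matches the limiting vector field used in the upper bound computation.

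The main obstacle is getting the sharp constant $6/7$ on both sides. For the upper bound one must identify the correct direction $X$ (i.e.\ the correct linear combination of generators of the torus) that minimizes the critical $s$, and this requires a careful integration against the $U(2)$-invariant measure on the moment polytope. For the lower bound, choosing $\varphi_s$ so that both eigenvalue inequalities hold simultaneously up to $s = 6/7$ is the delicate step: one eigenvalue typically forces a constraint coming from the behaviour of $\varphi$ near the endpoints (where the boundary conditions pin down the derivatives), while the other is a differential inequality in the interior, and it is the interplay between these that produces the threshold $6/7$.
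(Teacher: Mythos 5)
Your strategy coincides with the paper's on both halves: the upper bound via Proposition~\ref{prop:limJ} applied to the toric $\mathbf{C}^*$-action fixing $E$ and the infinity section, and the lower bound via the momentum (Calabi) construction on $\mathbf{P}(\mathcal{O}\oplus\mathcal{O}(-1))$. However, as written the proposal stops exactly where the content of the theorem lies: neither half contains the computation or construction that actually produces $6/7$ (both are deferred with ``should give exactly $s=6/7$'' and ``the task is then to produce $\varphi_s$''). For the upper bound the missing step is a short toric computation: with the polytope $P$ having vertices $(0,0),(2,0),(2,1),(0,3)$ and $H(x,y)=-x$, Donaldson's formula gives $\int_{M_1}H(2-S(\omega))\,\omega^2/2 = 2\int_P H\,d\mu-\int_{\partial P}H\,d\sigma=-2/3$, and with $K=1$ and $\mathrm{Vol}(M_1)=4$ the criterion of Proposition~\ref{prop:limJ} becomes $4-\tfrac{14}{3}s<0$, i.e.\ $s>6/7$. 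Note also that no ``optimization over linear Hamiltonians'' is needed or justified a priori: one exhibits a single $H$, and its sharpness is only known a posteriori from the lower bound.

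The gap on the lower bound is the more serious one, because that is where $6/7$ is forced and your sketch does not identify the mechanism. In the momentum ansatz with profile $\phi$ on $[0,2]$ and boundary data $\phi(0)=\phi(2)=0$, $\phi'(0)=2$, $\phi'(2)=-2$, the condition $\mathrm{Ric}\geqslant t\omega$ is two inequalities, and the key observations are: (i) the fibre inequality implies the base inequality after one integration, so only one differential inequality matters; (ii) solving its equality case with the initial conditions at $\tau=0$ gives $(1+\tau)\psi(\tau)=2\tau+\tau^2-t\bigl(\tau^2+\tfrac{2}{3}\tau^3\bigr)$, and the requirement $\psi(2)\geqslant 0$ forces $t\leqslant 6/7$ --- this is where the threshold comes from; (iii) at $t=6/7$ one has $\psi(2)=0$ but $\psi'(2)=-10/7\neq-2$, so $\psi$ does not define a smooth metric, and one must add a correction $\eta\geqslant 0$ with $\eta(0)=\eta(2)=0$, $\eta'(2)=-4/7$ and $|\eta'|,|\eta''|<\delta$, which restores the boundary conditions while degrading the Ricci bound only by $O(\delta)$; letting $\delta\to0$ yields $\mathrm{Ric}(\omega)\geqslant t\omega$ for every $t<6/7$. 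Without some version of (ii) and (iii) the construction does not pin down the constant. Finally, your side remark that the degenerate limit is a K\"ahler--Ricci soliton is incorrect: the limiting profile at $t=6/7$ defines a metric with conical singularities along the section at infinity (the soliton on $M_1$ is a different object and plays no role here).
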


We first show that $R(M_1)\leqslant 6/7$ using twisted slope stability.
Since we give an alternative proof, we will be brief.
Let us write $E$ for the exceptional divisor. We are using the
polarisation
$c_1(M_1)=\mathcal{O}(3)-E$. The Seshadri constant of $E$ is then 2.
If there is a metric $\omega$ with $\mathrm{Ric}(\omega) > t\omega$,
then by taking the trace, we must have
\[ S(\omega) - (1-t)\Lambda_\omega\alpha = 2t\]
for some positive form $\alpha\in c_1(M_1)$. According to
Stoppa~\cite{Sto08_1} (Section 5.1) we have
\[\begin{gathered}
	\frac{3}{2}\frac{2c_1(M_1).E - 2\big[(-c_1(M_1) + (1-t)c_1(M_1)).E +
E^2\big]}{2(3 c_1(M_1).E - 2E^2)} \\ \geqslant \frac{ -(-c_1(M_1)+
(1-t)c_1(M_1)).c_1(M_1)}{c_1(M_1)^2}.\end{gathered}\]
Computing this and simplifying, we obtain exactly $t\leqslant 6/7$. 

Alternatively for a more self-contained proof we can use
Proposition~\ref{prop:limJ}. It is easiest to compute in terms of toric
geometry. The moment polytope of $M_1$ has vertices
$(0,0),(2,0),(2,1),(0,3)$. We choose $H(x,y)=-x$. Then for any toric
metric $\omega\in c_1(M_1)$, using Donaldson's
formula~\cite{Don02} we have
\[ \int_{M_1} H(2-S(\omega))\,\frac{\omega^2}{2!} = 2\int_P H\,d\mu -
\int_{\partial P} H\,d\sigma = -\frac{2}{3},\]
where $d\mu$ is the Lebesgue measure on $P$ and $d\sigma$ is a multiple
of the Lebesgue measure on each edge of $P$, as described in
\cite{Don02}. The weight $K=1$, so 
\[ s\int_{M_1} H(2-S(\omega))\,\frac{\omega^2}{2!} 
+ (1-s)K\,\mathrm{Vol}(M_1) = 4-\frac{14}{3}s.\]
This is negative for $s > 6/7$, in which case
the functional $\mathcal{M} + (1-s)\mathcal{J}_\omega$ is not bounded
below by Proposition~\ref{prop:limJ}, and so 
$R(M_1)\leqslant 6/7$.  

To show that $R(M_1)\geqslant 6/7$ we explicitly construct metrics with
$\mathrm{Ric}(\omega) > t\omega$ for all $t < 6/7$. Again 
thinking of $M_1$ as the $\mathbf{P}^1$ bundle
$\mathbf{P}(\mathcal{O}(-1)\oplus\mathcal{O})$, we will use the momentum
construction to obtain metrics on $M_1$ (for more details on this
construction see \cite{HS02}). Let $\omega_0$ be the
Fubini-Study metric on $\mathbf{P}^1$, and let $h$ be a Hermitian metric
on $\mathcal{O}(-1)$ with curvature form $i\omega_0$. Write
$p:\mathcal{O}(-1)\to\mathbf{P}^1$ for the projection map. On the
complement of the zero section in the
total space of $\mathcal{O}(-1)$ define the metric
\[ \omega = p^*\omega_0 + 2i\partial\bar{\partial}f(s),\]
where $s=\frac{1}{2}\log |z|_h^2$ and $f(s)$ is a suitably convex
function. We change coordinates to $\tau = f'(s)$, which is the moment
map for the $S^1$-action rotating the fibres of $\mathcal{O}(-1)$. Let
$I\subset\mathbf{R}$ be the image of $\tau$ and let $F:I\to\mathbf{R}$
be the Legendre transform of $f$. In other words $F$ is defined by the
equation 
\[f(s) + F(\tau) = s\tau.\]
We then define the \emph{momentum profile} of the metric $\omega$ to be 
\[ \phi(\tau) = \frac{1}{F''(\tau)}.\]
We can compute the Ricci curvature of $\omega$ in terms of $\phi(\tau)$.
In addition if $\phi$ has suitable behaviour at the
endpoints of $I$, then the metric $\omega$ can be extended across the
zero and infinity sections, and we obtain a metric on $M_1$. This is
summarised in the following proposition. For more details see
\cite{HS02} (or also \cite{GSz07_1}). 
\begin{prop}
	Let $\phi:[0,2]\to\mathbf{R}$ be a smooth function such that
	$\phi$ is positive on $(0,2)$, and 
	\[ \phi(0)=\phi(2)=0,\quad \phi'(0)=2,\quad \phi'(2)=-2.\]
	Then we obtain a metric $\omega_\phi\in c_1(M_1)$, given in
	suitable local coordinates by 
	\[ \omega_\phi = (1+\tau)p^*\omega_0 + \phi(\tau)\frac{i\,
	dw\wedge d\overline{w}}{2|w|^2},\]
	and whose Ricci form is
	\[ \rho_\phi = \left(2-\frac{\left[(1+\tau)\phi\right]'}{
	2(1+\tau)}\right)p^*\omega_0
	- \phi \cdot
	\left\{\frac{\left[(1+\tau)\phi\right]'}{2(1+\tau)}\right\}' \cdot
	\frac{i\,
	dw\wedge d\overline{w}}{2|w|^2},\]
	where the primes mean differentiating with respect to $\tau$.
\end{prop}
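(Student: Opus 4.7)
The plan is to carry out the momentum construction in local coordinates and then read off both formulas. Following \cite{HS02}, I would fix a local trivialisation of $\mathcal{O}(-1)\to\mathbf{P}^1$ with fibre coordinate $w$, write the Hermitian metric as $h=e^{-\varphi}$ so that $s=\log|w|-\varphi(z,\overline{z})/2$, and expand
\[ \omega = p^*\omega_0 + 2if'(s)\,\partial\overline{\partial}s + 2if''(s)\,\partial s\wedge\overline{\partial}s \]
on the complement of the zero section. The identity $-i\partial\overline{\partial}\varphi=\omega_0$ controls the base part of $\partial\overline{\partial}s$, while $\partial s\wedge\overline{\partial}s$ carries the fibre contribution $\frac{dw\wedge d\overline{w}}{4|w|^2}$; passing to the natural coordinates adapted to the $S^1$-action rotating the fibres, as in the Calabi/Hwang--Singer construction, eliminates the mixed base--fibre terms.

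Next, I would change variables from $s$ to $\tau=f'(s)$ via the Legendre transform, using $f''(s)=1/F''(\tau)=\phi(\tau)$. This converts the base coefficient into $(1+\tau)$ and the fibre coefficient into $\phi(\tau)$, establishing the stated formula for $\omega_\phi$ on the open set $\tau\in(0,2)$. To verify smooth extension across the zero and infinity sections of $M_1=\mathbf{P}(\mathcal{O}(-1)\oplus\mathcal{O})$ and to identify the cohomology class as $c_1(M_1)$, I would pass to coordinates adapted to each section. Standard analysis (as in \cite{HS02}) shows that the boundary conditions $\phi(0)=0$, $\phi'(0)=2$ and $\phi(2)=0$, $\phi'(2)=-2$ are exactly what is required, and the length $2$ of the interval is fixed by the pairing $c_1(M_1)\cdot F=2$ with a fibre class $F$.

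For the Ricci form, I would first record
\[ \omega_\phi^2 = 2(1+\tau)\phi(\tau)\,p^*\omega_0\wedge\frac{i\,dw\wedge d\overline{w}}{2|w|^2}, \]
write this in holomorphic coordinates as a density, and apply $\rho_\phi=-i\partial\overline{\partial}\log\det g$. The base factor contributes $2\,p^*\omega_0$ (the Ricci form of $\omega_0$ on $\mathbf{P}^1$), the term $\log|w|^2$ is pluriharmonic and drops out, and the remaining contribution comes from $G(\tau):=\log[(1+\tau)\phi(\tau)]$. Expanding
\[ -i\partial\overline{\partial}G(\tau) = -iG'(\tau)\,\partial\overline{\partial}\tau - iG''(\tau)\,\partial\tau\wedge\overline{\partial}\tau \]
and inserting $\partial\overline{\partial}\tau=f''(s)\partial\overline{\partial}s+f'''(s)\partial s\wedge\overline{\partial}s$ with $f''(s)=\phi(\tau)$ and $f'''(s)=\phi(\tau)\phi'(\tau)$, I would split the result into base and fibre components. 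The main obstacle is the algebraic bookkeeping needed to reorganise the fibre coefficient into the compact form $-\phi\cdot\{[(1+\tau)\phi]'/(2(1+\tau))\}'$ stated in the proposition; this is routine application of Legendre duality and introduces no ideas beyond those already present in \cite{HS02}.
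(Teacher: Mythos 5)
Your proposal is correct and follows essentially the same route as the paper, which offers no proof of its own but simply cites the Hwang--Singer momentum construction (\cite{HS02}, \cite{GSz07_1}) for exactly this computation; your outline reproduces that computation, and the cross terms in fact cancel exactly in $\det g$, giving $\omega_\phi^2=2(1+\tau)\phi\,p^*\omega_0\wedge\frac{i\,dw\wedge d\overline{w}}{2|w|^2}$ globally, not merely in adapted coordinates. The bookkeeping you defer does close up: with $G=\log[(1+\tau)\phi]$ the fibre coefficient is $-\tfrac12\bigl(G'\phi\phi'+G''\phi^2\bigr)=-\phi\cdot\bigl\{\tfrac{[(1+\tau)\phi]'}{2(1+\tau)}\bigr\}'$, and the base coefficient $2-\tfrac12 G'\phi$ is the stated one.
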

In order to have $\rho_\phi \geqslant t\omega_\phi$ we need to satisfy two
inequalities
\[ \begin{gathered}
	2 - \frac{\left[(1+\tau)\phi\right]'}{2(1+\tau)} \geqslant t(1+\tau) \\
	-\left\{\frac{\left[(1+\tau)\phi\right]'}{2(1+\tau)}\right\}'
	\geqslant t. 
\end{gathered}\]
By integrating once, it is easy to see that the second inequality
implies the first one for
all $t\leqslant 1$. 

Let $t=6/7$ and let us solve the case of equality in the second
inequality. We obtain a $\psi$ such that
\[ (1+\tau)\psi(\tau) = 2\tau + \frac{1}{7}\tau^2 - \frac{4}{7}\tau^3,\] 
and which satisfies the boundary conditions
\[ \psi(0)=\psi(2)=0,\quad \psi'(0)=2,\quad \psi'(2)=-\frac{10}{7}.\]
Now let 
$\phi(\tau) = \psi(\tau) + \eta(\tau)$, where $\eta$ satisfies
\[ \eta(0)=\eta(2)=0,\quad \eta'(0)=0,\quad \eta'(2)=-\frac{4}{7}.\]
For any $\delta > 0$ we can choose $\eta$ so that for all $\tau$ we have
\[\eta(\tau)\geqslant0,\quad \eta'(\tau),\eta''(\tau)<\delta.\]
Then $\phi=\psi + \eta$ satisfies the boundary conditions that we want, and
\[\begin{aligned}
	-\left\{\frac{\left[(1+\tau)\phi\right]'}{2(1+\tau)}\right\}' 
	&= \frac{6}{7} -\left\{\frac{\left[(1+\tau)
	\eta\right]'}{2(1+\tau)}\right\}'  \\
	&= \frac{6}{7}-\frac{1}{2}\eta''(\tau) - 
	\frac{\eta'(\tau)}{2(1+\tau)} +
	\frac{\eta(\tau)}{2(1+\tau)^2} \\
	&> \frac{6}{7} - \delta.
\end{aligned}\]
Letting $\delta\to 0$, we find that we can obtain a metric with
$\mathrm{Ric}(\omega)\geqslant t\omega$ for all $t < 6/7$, so $R(M_1)\geqslant
6/7$. Note that we would have to analyse the metrics more carefully near
$\tau=0$ and $\tau=2$ to see whether we have the strict inequality, but
clearly $\mathrm{Ric}(\omega)\geqslant t\omega$ is enough for what we
want. This completes the proof that $R(M_1)=6/7$. 

Note that in the limiting case $t=6/7$, the function $\psi$ that we 
found above defines a singular
metric satisfying $\mathrm{Ric}(\omega)\geqslant \frac{6}{7}\omega$. The
fact that $\psi'(2)=-10/7$ means that the metric has conical
singularities with angle $2\sin^{-1}\sqrt{5/7}$ along a line not meeting
the exceptional divisor (ie. along the divisor at
infinity in $\mathbf{P}(\mathcal{O}(-1)\oplus\mathcal{O})$).

\subsection{$\mathbf{P}^2$ blown up in two points}
Let $M_2$ be $\mathbf{P}^2$ blown up in two points. In this section we
prove
\begin{prop} $1/2\leqslant R(M_2)\leqslant 21/25$.
\end{prop}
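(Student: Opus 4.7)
The plan splits cleanly into the two inequalities. For the upper bound I would apply Proposition~\ref{prop:limJ} in the toric setting, mirroring the $M_1$ argument; for the lower bound I would invoke the alpha-invariant inequality $R(M)\geqslant\frac{n+1}{n}\alpha(M)$ cited in the previous section.

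For the upper bound $R(M_2)\leqslant 21/25$, I realize $M_2$ as the toric surface whose Delzant moment polytope $P$ has vertices $(0,0),(2,0),(2,1),(1,2),(0,2)$, i.e.\ the $\mathbf{P}^2$-simplex with two opposite corners cut off. This $P$ is symmetric under $(x,y)\mapsto(y,x)$, so the natural Hamiltonian to try is $H(x,y)=-x-y$; its minimum over $P$ is attained on the edge $\{x+y=3\}$ produced by the second blow-up. Donaldson's formula gives
\[
\int_{M_2}H\bigl(2-S(\omega)\bigr)\,\frac{\omega^2}{2!}
= 2\int_P H\,d\mu - \int_{\partial P}H\,d\sigma.
\]
A direct integration yields $\int_P(x+y)\,d\mu=19/3$ (using the diagonal symmetry) and, summing edge-by-edge with the primitive lattice length, $\int_{\partial P}H\,d\sigma=-12$, so the right hand side equals $-2/3$. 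The volume is $\mathrm{Vol}(M_2)=7/2$. The weight $K$ of the $S^1$-action generated by $J\nabla H$ on the anticanonical bundle along the minimum locus is read off at a torus-fixed point of that locus: at the vertex $(2,1)$ the two primitive edge tangents into $P$ are $(0,-1)$ and $(-1,1)$, summing to $(-1,0)$, so $K=\langle(-1,-1),(-1,0)\rangle=1$ (one checks the same value at $(1,2)$). Proposition~\ref{prop:limJ} then gives a negative limit precisely when
\[
-\tfrac{2s}{3}+\tfrac{7(1-s)}{2}=\tfrac{7}{2}-\tfrac{25s}{6}<0,
\]
that is, exactly when $s>21/25$, proving $R(M_2)\leqslant 21/25$.

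For the lower bound $R(M_2)\geqslant 1/2$, I would combine the alpha-invariant inequality cited in the text with the value $\alpha(M_2)\geqslant 1/3$ for the degree-$7$ del Pezzo surface (a standard log-canonical-threshold computation, taking anticanonical divisors supported on the $(-1)$-curves and their transforms). For $n=2$ this yields $R(M_2)\geqslant\tfrac{3}{2}\cdot\tfrac{1}{3}=\tfrac{1}{2}$.

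The main obstacle is of a bookkeeping nature in the upper bound: one must identify $K$ correctly as the pairing of the Lie-algebra element of the circle with the sum of primitive edge tangents into $P$ at a torus-fixed point on the minimum locus, equivalently the weight on $-K_{M_2}$, and keep consistent lattice-measure and sign conventions in Donaldson's formula. For the lower bound the only substantive input is $\alpha(M_2)\geqslant 1/3$, which is classical, so beyond citing it nothing further is needed.
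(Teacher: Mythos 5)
Your proof is correct and is essentially the paper's own argument: the upper bound is exactly the paper's application of Proposition~\ref{prop:limJ} on the toric polygon with $H=-x-y$, $K=1$ and $\mathrm{Vol}(M_2)=\frac{7}{2}$, giving $\frac{7}{2}-\frac{25}{6}s<0$ precisely for $s>21/25$ (your intermediate values $\int_P(x+y)\,d\mu=19/3$, $\int_{\partial P}H\,d\sigma=-12$, hence $-2/3$, all check out), and the lower bound is the same alpha-invariant argument $R(M_2)\geqslant\frac{3}{2}\alpha\geqslant\frac{1}{2}$, with the only difference that you quote the classical global value $\alpha(M_2)\geqslant 1/3$ for the degree-$7$ del Pezzo where the paper cites Song's computation of the toric alpha invariant. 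Two cosmetic points: the minimizing edge $\{x+y=3\}$ is the strict transform of the line through the two blown-up points rather than an edge created by a blow-up, and establishing $\alpha(M_2)\geqslant 1/3$ requires bounding the log canonical threshold of all anticanonical $\mathbf{Q}$-divisors, not only those supported on the $(-1)$-curves (which only give upper bounds); neither affects the argument.
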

In this case twisted 
slope stability will not give any obstruction, since $M_2$ is slope
stable (see
Panov-Ross~\cite{PR07}) and our twisting just makes things more stable (we
are adding a proper function to $\mathcal{M}$). However we can apply
Proposition~\ref{prop:limJ}. Once again we work in terms of the toric
polygon to make the computations easier. The polygon corresponding to
$M_2$ has vertices $(0,0),(2,0),(2,1),(1,2),(0,2)$. We let
$H(x,y)=-x-y$, so the weight $K=1$. 
As before, using Donaldson's formulae we obtain
\[ s\int_{M_2} H(2-S(\omega))\,\frac{\omega^2}{2} + (1-s)K\,\mathrm{Vol}
(M_2) =
\frac{7}{2} - \frac{25}{6}s.\]
This is negative if $s > 21/25$, so by Proposition~\ref{prop:limJ}
we obtain $R(M_2)\leqslant
21/25$.

To show that $R(M_2)\geqslant 1/2$ we use the $\alpha$-invariant.
According to Song~\cite{Song05} the $\alpha$-invariant for torus
invariant K\"ahler potentials on $M_2$ is $1/3$. It follows (see
\cite{Tian87}) that
$R(M_2)\geqslant 1/3\cdot 3/2 = 1/2$. It would be very interesting to
find better bounds on $R(M_2)$. 

\section{More general test-configurations}\label{sec:test-config}
We have seen in Proposition~\ref{prop:def} that if $M$ is
K\"ahler-Einstein then $R(M)=1$ but the converse is not true. In this
section we show the following weaker converse.
\begin{thm}\label{thm:semistab}
	If $R(M)=1$, then $M$ is K-semistable with respect to 
	test-con\-fi\-gu\-ra\-tions with smooth total space.
\end{thm}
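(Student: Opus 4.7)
The plan is to associate to each smooth test-configuration a one-parameter family of metrics on $M$, identify the asymptotic slope of the Mabuchi functional along this family with the Donaldson-Futaki invariant, and conclude using the properness statement provided by $R(M)=1$.

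Given a test-configuration $(\mathcal{X},\mathcal{L})\to\mathbf{C}$ with smooth total space and $\mathbf{C}^*$-action, I would first produce a smooth ray $\omega_s\in c_1(M)$ for $s\in[0,\infty)$ as follows. Pick a smooth positively curved Hermitian metric on $\mathcal{L}$; its curvature restricted to a non-zero fiber $\mathcal{X}_\tau\cong M$ gives a Kähler form in $c_1(M)$, and setting $s=-\log|\tau|$ and using the $\mathbf{C}^*$-action to trivialize over $\mathbf{C}^*$ produces the ray on a fixed copy of $M$. Because $\mathcal{X}$ (including its central fiber) is smooth, $\omega_s$ is smooth in $s$ and its behaviour as $s\to\infty$ is controlled by the geometry of $(\mathcal{X}_0,\mathcal{L}_0)$ together with the generator of the $\mathbf{C}^*$-action on the central fiber.

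Next I would establish the asymptotic formulas
\[ \mathcal{M}(\omega_s) = A\,s + O(1), \qquad \mathcal{J}_\omega(\omega_s) = B\,s + O(1), \]
where $A$ is a positive multiple of the Donaldson-Futaki invariant $\mathrm{DF}(\mathcal{X},\mathcal{L})$ and $B\geqslant 0$. The identification $A=c\cdot\mathrm{DF}$ is the standard computation of the slope of the Mabuchi functional along a ray coming from a smooth test-configuration: integrate the variational formula for $\mathcal{M}$ against the generator of the $\mathbf{C}^*$-action and recognise the resulting weight as the Futaki invariant of $(\mathcal{X}_0,\mathcal{L}_0)$ via Donaldson's equivariant Riemann-Roch expansion. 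Non-negativity of $B$ is essentially the statement that $\mathcal{J}_\omega\geqslant 0$ and is convex along the ray, so its slope is controlled from below by its boundary behaviour.

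The conclusion then follows in one line: $R(M)=1$ implies, by the theorem proved in Section 2, that $\mathcal{M}+(1-t)\mathcal{J}_\omega$ is proper, hence bounded below, for every $t<1$; its slope along the ray is $A+(1-t)B$, so boundedness below forces $A+(1-t)B\geqslant 0$ for all $t<1$, and sending $t\to 1$ yields $A\geqslant 0$, i.e.\ K-semistability. The main obstacle is the second step: rigorously verifying that for smooth total space the Mabuchi slope along the constructed ray coincides with the algebraic Donaldson-Futaki invariant with a genuine $O(1)$ remainder rather than a $\log$-growing correction. This is where the smoothness of $\mathcal{X}$ enters essentially, since on a singular total space one must deal with extra error contributions and sharper convergence estimates on the central fiber that are not needed here.
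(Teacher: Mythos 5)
Your strategy is essentially the paper's: both attach to a smooth test-configuration a ray of metrics on $M$ (you via a positively curved metric on $\mathcal{L}$ restricted to the fibres, the paper via a projective embedding $F:M\to\mathbf{P}^N$, an $S^1$-invariant Fubini--Study metric and the gradient flow $f_t$ of its Hamiltonian $H$), both take as the key external input the relation between the asymptotic behaviour of $\mathcal{M}$ along this ray and the Donaldson--Futaki invariant when the total space is smooth, and both conclude from the properness of $\mathcal{M}+(1-t)\mathcal{J}_\omega$ for every $t<1$, which is what $R(M)=1$ supplies through the theorem of Section 2. So the skeleton is right; the issue is where you put the weight of the proof.

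The one step you must actually prove --- and the only analytic point the paper proves itself --- is that $\mathcal{J}_\omega$ grows at most linearly along the ray, i.e.\ that your slope $B$ is finite. Without this, $(1-t)\mathcal{J}_\omega$ could dominate for each fixed $t<1$, the functional $\mathcal{M}+(1-t)\mathcal{J}_\omega$ would be trivially bounded below along the ray, and the limit $t\to 1$ would yield nothing about $A$. Your stated justification (nonnegativity and convexity of $\mathcal{J}_\omega$ along the ray) only controls $B$ from below, which is the direction you do not need. The paper's fix is two lines: along its ray $\omega_t=F^*(f_t^*\omega_{FS})$ one has $\dot{\phi_t}=F^*(f_t^*H)$ with $\sup H=0$, so $\dot{\phi_t}\Lambda_{\omega_t}\omega\leqslant 0$ and hence $\frac{d}{dt}\mathcal{J}_\omega(\omega_t)\leqslant -n\,\mathrm{Vol}(M)\inf H$, a uniform upper bound; the same bound holds for your fibrewise construction because $\dot{\phi}_s$ is again a uniformly bounded potential. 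Separately, you do not need the exact expansion $\mathcal{M}(\omega_s)=c\,\mathrm{DF}\cdot s+O(1)$ that you flag as the main obstacle (and which is delicate when the central fibre is singular even though $\mathcal{X}$ is smooth): the paper argues contrapositively and invokes only the sign statement of \cite{PRS06}, \cite{PT06}, namely that a negative Futaki invariant and smooth total space force $\limsup_{t\to\infty}\frac{d}{dt}\mathcal{M}(\omega_t)<0$; combined with the uniform bound on $\frac{d}{dt}\mathcal{J}_\omega$ this makes $\mathcal{M}+\epsilon\mathcal{J}_\omega$ unbounded below for small $\epsilon>0$, whence $R(M)\leqslant 1-\epsilon$. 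Weakening your second step to this sign statement, and adding the derivative bound on $\mathcal{J}_\omega$, turns your outline into a complete proof.
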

Before giving the proof we briefly explain K-semistability. A
test-configuration $\chi$ for $M$ is a flat polarised family
$\pi:(\mathcal{M},\mathcal{L})\to\mathbf{C}$, such that
\begin{itemize}
	\item $\pi$ is $\mathbf{C}^*$-equivariant, 
	\item $\mathcal{L}$ is relatively ample,
	\item we have
\[ (\mathcal{M}_t,\mathcal{L}|_{\mathcal{M}_t})\cong (M,(-K_M)^k) \]
for $t\not=0$ and some integer $k>0$. 
\end{itemize}
The central fibre is then a polarised scheme $(M_0,L_0)$, with a
$\mathbf{C}^*$ action. This allows us to define the Futaki invariant
$F(\chi)$ of the test-configuration, which generalises the classical
Futaki invariant in case $M_0$ is smooth and the $\mathbf{C}^*$-action
is generated by a holomorphic vector field. For details see
Donaldson~\cite{Don02}. The manifold $M$ is called \emph{K-semistable}
if $F(\chi)\geqslant 0$ for all test-configurations $\chi$. If in
addition $F(\chi)=0$ only for test-configurations where the central
fibre is isomorphic to $M$, we say that $M$ is \emph{K-polystable}. The
central conjecture is
\begin{conj}[Yau-Tian-Donaldson Conjecture] The manifold $M$ admits a
	K\"ahler-Einstein metric if and only if $M$ is K-polystable.
\end{conj}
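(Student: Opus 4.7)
\textit{Proposal.} The plan is to convert the hypothesis $R(M)=1$ into an inequality for the asymptotic slope of $\mathcal{M}$ along a ray coming from the test-configuration, and then identify that slope with the Donaldson--Futaki invariant.

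Fix a test-configuration $\chi=(\mathcal{M},\mathcal{L})\to\mathbb{C}$ with smooth total space, and first produce an associated one-parameter family $\omega_s\in c_1(M)$. Choose an $S^1$-invariant relative K\"ahler form $\Omega$ on $\mathcal{M}$ in the class $c_1(\mathcal{L})$, identify $M\cong\mathcal{M}_1$, and let $\omega_s$ be the rescaling to $c_1(M)$ of $\sigma(e^{-s})^{*}(\Omega|_{\mathcal{M}_{e^{-s}}})$, where $\sigma$ denotes the $\mathbf{C}^{*}$-action on $\mathcal{M}$. Smoothness of $\mathcal{M}$ makes $s\mapsto\omega_s$ into a smooth ray in the space of K\"ahler potentials on $M$ (and, up to a bounded perturbation, one can arrange for it to be the smooth geodesic ray given by the Phong--Sturm solution of the homogeneous complex Monge--Amp\`ere equation on $M\times\text{annulus}$).

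Next, invoke two standard asymptotic slope identities. By the work of Tian~\cite{Tian97} and Paul--Tian/Phong--Ross--Sturm, for a smooth test-configuration the Mabuchi functional is asymptotically linear along $\omega_s$ with slope equal to the Donaldson--Futaki invariant,
$$\lim_{s\to\infty}\frac{\mathcal{M}(\omega_s)}{s}=F(\chi).$$
By Lemma~\ref{lem:diff} the functional $\mathcal{J}_\eta$ differs from Aubin's $I-J$ by a bounded quantity, and $I-J$ is well known to grow at most linearly along such rays, so the limit $B:=\lim_{s\to\infty}\mathcal{J}_\eta(\omega_s)/s$ exists and is finite. Now apply the hypothesis: by the theorem of Section~2, $R(M)=1$ means that for every $t<1$ the functional $\mathcal{M}+(1-t)\mathcal{J}_\eta$ is proper, and in particular bounded below by some constant $-C_t$. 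Evaluating on $\omega_s$, dividing by $s$ and letting $s\to\infty$ yields
$$F(\chi)+(1-t)B\geq 0\qquad\text{for all }t<1,$$
and since $B$ is finite, letting $t\to 1$ gives $F(\chi)\geq 0$, which is K-semistability of $M$ with respect to $\chi$.

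The main obstacle is the Mabuchi slope identity $\lim\mathcal{M}(\omega_s)/s=F(\chi)$: for general (singular) test-configurations only a one-sided inequality is available, which in this setup goes in the wrong direction for the argument. The assumption that the total space is smooth is used precisely here, so that the slope computation reduces to the algebro-geometric Donaldson--Futaki expression via equivariant Riemann--Roch on a smooth family and produces the genuine limit identity. The other steps---construction of the smooth ray from a relative K\"ahler form on $\mathcal{M}$, linear growth of $\mathcal{J}_\eta$ on such a ray, and the final passage $t\to 1$---are routine.
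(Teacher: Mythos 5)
The statement you were asked to address is the Yau--Tian--Donaldson conjecture itself: the equivalence between the existence of a K\"ahler--Einstein metric on $M$ and K-polystability of $M$. The paper does not prove this statement --- it is stated as a \emph{conjecture}, precisely because it was (and in this generality remains) a deep open problem; the paper records it only to motivate the weaker results that follow. There is therefore no ``paper's own proof'' to compare against, and no proof of the conjecture should be expected.

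Your proposal does not actually engage with that statement. What you prove (or sketch) is that $R(M)=1$ implies $F(\chi)\geqslant 0$ for test-configurations $\chi$ with smooth total space --- this is the paper's Theorem~\ref{thm:semistab}, a different and much weaker claim. Establishing K-semistability from $R(M)=1$ says nothing about the existence direction of the YTD conjecture (producing a K\"ahler--Einstein metric from K-polystability), nor even about the full converse direction (K-polystability, as opposed to semistability, from existence), and it is restricted to smooth total spaces. So as a proof of the stated conjecture the proposal has a fatal gap: it proves the wrong statement.

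As an aside, if your target had been Theorem~\ref{thm:semistab}, your argument is essentially the contrapositive-free repackaging of the paper's proof: the paper assumes $F(\chi)<0$, uses the result of \cite{PRS06} that $\limsup_{t\to\infty}\frac{d}{dt}\mathcal{M}(\omega_t)<0$ along the ray induced by the $\mathbf{C}^*$-action (this is where smoothness of the total space enters, exactly as you observe), bounds $\frac{d}{dt}\mathcal{J}_\omega(\omega_t)$ by $-n\,\mathrm{Vol}(M)\inf H$, and concludes $R(M)\leqslant 1-\epsilon$. Your version, which divides $\mathcal{M}+(1-t)\mathcal{J}_\eta$ by $s$ and lets $t\to 1$, needs the genuine two-sided slope identity $\lim_s\mathcal{M}(\omega_s)/s=F(\chi)$ rather than the one-sided estimate the paper uses; that is a stronger input than what \cite{PRS06} is invoked for, and you should be careful that the available result in the smooth case suffices. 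But none of this bears on the Yau--Tian--Donaldson conjecture as stated.
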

In light of this it is reasonable to conjecture the following.
\begin{conj} The Fano manifold $M$ is K-semistable if and only if
	$R(M)=1$. 
\end{conj}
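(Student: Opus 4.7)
The plan is to show that for any test-configuration $\chi$ with smooth total space, both $\mathcal{M}$ and $\mathcal{J}_\omega$ have well-defined asymptotic slopes along a naturally associated path of K\"ahler metrics on $M$, and then to exploit the properness statement of Theorem 1 to force $F(\chi)\geqslant 0$.

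First, one constructs the path. Given a test-configuration $\pi:(\mathcal{M},\mathcal{L})\to\mathbf{C}$ with smooth total space, pick an $S^1$-invariant smooth closed $(1,1)$-form $\Omega$ on $\mathcal{M}$ whose restriction to each fibre is K\"ahler and represents a multiple of the polarisation class. Using the $\mathbf{C}^*$-action on $\mathcal{M}$, identify each fibre $\mathcal{M}_{e^{-s}}$ (for $s\in[0,\infty)$) with the generic fibre $M$, and let $\omega_s\in c_1(M)$ denote the pull-back of $\Omega|_{\mathcal{M}_{e^{-s}}}$ under this identification. This gives a smooth path of K\"ahler metrics in $c_1(M)$.

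Next one computes the two asymptotic slopes. By the standard computation going back to Donaldson~\cite{Don02}, smoothness of the total space implies
\[ \lim_{s\to\infty}\frac{d}{ds}\mathcal{M}(\omega_s) = c\,F(\chi), \]
for an explicit positive constant $c$. A parallel but more elementary calculation shows that
\[ \lim_{s\to\infty}\frac{d}{ds}\mathcal{J}_\omega(\omega_s) = K_\omega \]
exists as a finite number depending only on $\chi$ and $\omega$: this uses that $\dot\phi_s$ coincides with the pull-back of the Hamiltonian $H$ for the $S^1$-action on $\Omega$, so is uniformly bounded in $s$, and that $\Omega$ (together with a smooth extension of $\omega$ to a neighbourhood of the central fibre) has controlled behaviour as $s\to\infty$. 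Crucially, $K_\omega$ does not depend on $t$.

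Finally one applies Theorem 1. Since $R(M)=1$, for every $t<1$ the functional $\mathcal{M}+(1-t)\mathcal{J}_\omega$ is proper and in particular uniformly bounded below. Evaluating along the path yields
\[ \mathcal{M}(\omega_s) + (1-t)\mathcal{J}_\omega(\omega_s) \geqslant -C(t) \]
for all $s\geqslant 0$, so letting $s\to\infty$ forces the asymptotic slope to satisfy
\[ c\,F(\chi) + (1-t)\,K_\omega \geqslant 0. \]
Sending $t\to 1^-$ yields $F(\chi)\geqslant 0$, which is the desired K-semistability. The main obstacle is the first slope computation, identifying the asymptotic slope of $\mathcal{M}$ with the Donaldson-Futaki invariant; although this is by now well-established, it is exactly here that smoothness of the total space enters essentially, via the intersection-theoretic formula for $F(\chi)$ on the compactification of $\mathcal{M}$. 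The finiteness of the $\mathcal{J}_\omega$-slope is easier but similarly relies on smoothness to ensure that the relevant integrands extend continuously across the central fibre.
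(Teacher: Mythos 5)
The statement you are proving is stated in the paper as a \emph{conjecture}, and your argument does not prove it: at best it establishes one implication, and only in a restricted form. The conjecture is a biconditional, and the direction ``$M$ K-semistable $\Rightarrow R(M)=1$'' is not addressed anywhere in your proposal; this is the substantial open part, and nothing in the paper's machinery (Theorem 1, Proposition \ref{lem:bdd}, etc.) yields it. For the direction you do address, your argument is confined from the outset to test-configurations with smooth total space, because the identification of the asymptotic slope of $\mathcal{M}$ along the ray with (a multiple of) $F(\chi)$ --- or even the weaker statement $\limsup_{s\to\infty}\frac{d}{ds}\mathcal{M}(\omega_s)<0$ when $F(\chi)<0$, which is all one needs --- is only available in that case (this is exactly the input from \cite{PRS06}, \cite{PT06}). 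K-semistability, however, requires $F(\chi)\geqslant 0$ for \emph{all} test-configurations, including those with singular total space, and your method gives no access to these. In other words, what you have sketched is (a variant of) the paper's Theorem \ref{thm:semistab}, which is explicitly presented as only ``going some way'' towards the conjecture, not a proof of the conjecture itself.

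Two further points on the part you do argue. First, your claim that $\lim_{s\to\infty}\frac{d}{ds}\mathcal{M}(\omega_s)=c\,F(\chi)$ exactly is stronger than what is known and can fail (e.g. when the central fibre is non-reduced, even with smooth total space, the slope and the Donaldson-Futaki invariant differ by a correction term); fortunately only an inequality in one direction is needed, so this is repairable but should be stated as such. Second, you do not need (and do not really justify) the existence of a finite limit $K_\omega$ for the slope of $\mathcal{J}_\omega$: the paper's proof of Theorem \ref{thm:semistab} avoids this by simply bounding $\frac{d}{dt}\mathcal{J}_\omega(\omega_t)$ above uniformly, using $\dot{\phi_t}=F^*(f_t^*H)$ with $H\leqslant 0$, so that
\[ \frac{d}{dt}\mathcal{J}_\omega(\omega_t)\leqslant -n\,\mathrm{Vol}(M)\,\inf H, \]
and then adding $\epsilon\mathcal{J}_\omega$ with $\epsilon$ small keeps the $\limsup$ of the derivative negative, contradicting properness. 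You should adopt this cheaper bound, and, more importantly, recognise that the full conjecture remains open on both sides of the equivalence.
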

Our Theorem \ref{thm:semistab} goes some way in proving the easier
direction of this conjecture. 
\begin{proof}[Proof of Theorem \ref{thm:semistab}]
	Suppose we have a test-configuration for $M$ with total space
	$\mathcal{M}$. We can realise it
	as a one parameter group acting on an embedding in projective
	space. More precisely we have
	an embedding $F:M\to\mathbf{P}^N$ and a $\mathbf{C}^*$-action on
	$\mathbf{P}^N$. Choose a Fubini-Study metric $\omega_{FS}$ on
	$\mathbf{P}^N$ which in invariant under $S^1$, and let $H$ be
	a Hamiltonian function of this $S^1$-action, normalised so that
	$\sup H=0$. Let us write $f_t:\mathbf{P}^N\to\mathbf{P}^N$ for
	the gradient flow of $\nabla H$. We then have a family of
	metrics
	\[ \omega_t = F^*(f_t^*\omega_{FS}) \]
	on $M$ and we let $\omega=\omega_0$. 
	Suppose that the Futaki invariant of the
	test-configuration is negative, ie. $M$ is not K-semistable. We
	want to show that $R(M)<1$. Since the total space of the
	test-configuration is smooth, according to \cite{PRS06} (see also
	\cite{PT06}) we have
	\[ \limsup_{t\to\infty}\frac{d}{dt}\mathcal{M}(\omega_t) < 0.\]
	We want to show that for suitably small $\epsilon>0$ we have
	\begin{equation}\label{ineq:epsilon}
		\limsup_{t\to\infty}\frac{d}{dt}(\mathcal{M}(\omega_t) +
		\epsilon\mathcal{J}_\omega(\omega_t)) < 0,
	\end{equation}
	which will imply that $R(M)\leqslant 1-\epsilon$. 

	To show the inequality (\ref{ineq:epsilon}) we compute
	\[ \frac{d}{dt}\mathcal{J}_\omega(\omega_t) = \int_M
	\dot{\phi_t}(\Lambda_{\omega_t}\omega-n)\,\omega_t^n.\]
	Note that $\dot{\phi_t} =F^*(f_t^*H)$ and since $H\leqslant 0$,
	we have
	\[ \frac{d}{dt}\mathcal{J}_\omega(\omega_t) \leqslant -n\int_M
	F^*(f_t^*H)\,\omega_t^n \leqslant -n\,\mathrm{Vol}(M)\,\inf H.\]
	Thus the limit as $t\to\infty$ is bounded above, so for suitably small
	$\epsilon>0$ we have (\ref{ineq:epsilon}).
\end{proof}

\bibliographystyle{hacm}
\bibliography{../mybib}
\end{document}